\numberwithin{equation}{section}
\def\ca{{\mathcal A}}
\def\cb{{\mathcal B}}
\def\cc{{\mathcal C}}
\def\cd{{\mathcal D}}
\def\ce{{\mathcal E}}
\def\ch{{\mathcal H}}
\def\ci{{\mathcal I}}
\def\ck{{\mathcal K}}
\def\cl{{\mathcal L}}
\def\car{{\mathcal R}}
\def\cs{{\mathcal S}}
\def\ct{{\mathcal T}}
\def\ga{{\mathfrak A}}
\def\gs{{\mathfrak S}}
\def\ba{{\mathbb A}}
\def\bc{{\mathbb C}}
\def\bn{{\mathbb N}}
\def\bq{{\mathbb Q}}
\def\br{{\mathbb R}}
\def\bt{{\mathbb T}}
\def\bz{{\mathbb Z}}
\def\a{\alpha}
\def\b{\beta}
\def\g{\gamma}  \def\G{\Gamma}
\def\d{\delta}  \def\D{\Delta}
\def\eps{\varepsilon}
\def\l{\lambda} \def\L{\Lambda}
\def\m{\mu}
\def\n{\nu}
\def\r{\rho}
\def\s{\sigma} 
\def\t{\tau}
\def\f{\varphi}  
\def\th{\theta} 
\def\om{\omega} 
\def\z{\zeta}
\def\id{\hbox{id}}
\def\ker{\hbox{Ker}}
\newtheorem{thm}{Theorem}[section]
\newtheorem{prop}[thm]{Proposition}
\theoremstyle{definition}
\newtheorem{rem}[thm]{Remark}
\newtheorem{defin}[thm]{Definition}
\def\!{\mskip-\thinmuskip}
\def\sign{\mathop{\rm sign}}
\def\supp{\mathop{\rm supp}}
\newcommand{\ty}[1]{\mathop{\rm {#1}}}
\def\di{{\rm d}}
\def\id{{\rm id}}
\def\idd{{1}\!\!{\rm I}}
\newcommand{\snorm}{[\!]}
\DeclareMathAlphabet{\mathpzc}{OT1}{pzc}{m}{it}
\begin{document}

\title[spectral triples and Fredholm modules]
{Modular spectral triples and deformed Fredholm modules}
\author{Fabio Ciolli}
\address{Fabio Ciolli\\
Dipartimento di Matematica \\
Universit\`{a} di Roma Tor Vergata\\
Via della Ricerca Scientifica 1, Roma 00133, Italy} \email{{\tt
ciolli@mat.uniroma2.it}}
\author{Francesco Fidaleo}
\address{Francesco Fidaleo\\
Dipartimento di Matematica \\
Universit\`{a} di Roma Tor Vergata\\
Via della Ricerca Scientifica 1, Roma 00133, Italy} \email{{\tt
fidaleo@mat.uniroma2.it}}
\date{\today}

\keywords{Noncommutative Geometry, Noncommutative Torus, Type III Representations, Spectral Triples,  Fredholm Modules, One Dimensional Topology, Diffeomorphisms of the Circle, Quasi Invariant Ergodic Measures}
\subjclass[2010]{58B34, 46L36, 22D25, 46L10, 46L65, 46L30, 46L87, 37A55, 37E10, 46L80, 81R60.}

\begin{abstract}

In the setting of non-type $\ty{II_1}$ representations, we propose a definition of {\it deformed Fredholm module} $\big[D_\ct|D_\ct|^{-1}\,,\,{\bf\cdot}\,\big]_\ct$ for a modular spectral triple $\ct$, where $D_\ct$ is the deformed Dirac operator. $D_\ct$ is assumed to be invertible for the sake of simplicity, and its domain is an ``essential" operator system $\ce_\ct$. According to such a definition, we obtain $\big[D_\ct|D_\ct|^{-1}\,,\,{\bf\cdot}\,\big]_\ct=|D_\ct|^{-1}d_\ct(\,{\bf\cdot}\,)+d_\ct(\,{\bf\cdot}\,)|D_\ct|^{-1}$, where $d_\ct$ is the deformed derivation associated to $D_\ct$.
Since the ``quantum differential" $1/|D_\ct|$ appears in a symmetric position, such a definition of 
Fredholm module differs from the usual one even in the undeformed case, that is in the tracial case.
Therefore, it seems to be more suitable for the investigation of noncommutative manifolds in which the nontrivial modular structure might play a crucial role. We show that all models in \cite{FS} of non-type $\ty{II_1}$ representations of noncommutative 2-tori indeed provide 
modular spectral triples, and in addition deformed Fredholm modules according to the definition proposed in the present paper. Since the detailed knowledge of the spectrum of the Dirac operator plays a fundamental role in spectral geometry, we provide a characterisation of eigenvalues and eigenvectors of the deformed Dirac operator $D_\ct$ in terms of the periodic solutions of a particular class of eigenvalue Hill equations. 
\end{abstract}
\maketitle

\tableofcontents

\section{Introduction}
It is well known that in noncommutative approach to mathematics, the concept of ``point" is meaningless. The same happens in quantum physics in an attempt to unify electromagnetic, weak and strong interactions with the gravitation where, at Planck scale, any reasonable measurement process is lost. 

Another fundamental long standing problem in number theory, and thus in a different 
direction even if connected with the previous problem, concerns the attempt to prove the Riemann hypothesis about the location of the zeroes of the Riemann zeta-function $\z(z)$, 
see e.g.\  \cite{C1}.
Therefore, motivated by the two potential applications described above, Connes' \emph{noncommutative geometry} grew impetuously in the last decades.

Regarding other relevant applications of noncommutative geometry, we mention that the {\it quantum Hall effect} can be explained by the Connes-Chern character ${\rm Ch}(P_F)$ of the eigen-projection of the Hamiltonian on energies smaller than or equal to the Fermi level. Therefore, for some relevant applications, the pairing between the cyclic cohomology and $K$-theory plays a crucial role.

We note that it would be difficult to provide an exhaustive list of the literature. However, for a natural starting point
we refer the reader to the seminal monographs \cite{C0, CPR, GBVF}, and the standard available references on this topic. 

Taking into account the above introductory facts, a ``noncommutative manifold" might describe the physics of the quantum Hall effect (see e.g.\  \cite{BSvE} and the references therein), or that arising from Connes' standard model and the relative {\it spectral action }(cf.\ \cite{CL, CC}). 
Also, any commutative manifold can be considered as a particular case of a noncommutative one. In this last case, Connes' reconstruction theorem (cf.\ \cite{C0}) allows to reconstuct all relevant properties of the manifold under consideration in terms of the natural axioms of noncommutative geometry in which the concept of spectral triple assumes a quite relevant role.

Therefore, one of the main ingredient in noncommutative geometry is then the so-called (undeformed) \emph{spectral triple} and the associated {\it Fredholm module}, the last being also  the candidate to encode many important properties of the manifold under consideration, and then to keep together K-theory, topology, measure theory and many others. 

On the other hand, the discovery of a deep connections between the quantum statistical mechanics,  in particular the {\it Kubo-Martin-Schwinger} (KMS for short) {\it condition} (cf.\ \cite{Ku, MS, HHW}) and, on the mathematical side, the Tomita theory (e.g.\ \cite{T0}) dealing with fundamental questions of operator algebras (see e.g.\ \cite{C00}),  suggests the necessary need to take the modular data into account.

To the knowledge of the authors, the idea to take into consideration the modular data in defining the spectral triples has been started in \cite{CM0} with the perspective of the application to the theory of foliations. 

The project of exhibiting twisted spectral triples according to the modular data
is indeed carried out in detail in \cite{CM}. For the
spectral triples considered in \cite{CM}, the twisting comes from inner and bounded perturbations of the canonical trace of the noncommutative 2-torus, thus in the context of type $\ty{II_1}$ representations. Such spectral triples were called {\it modular}. 

Many other works have been devoted to the investigation of deformed spectral triples whose twist arises, or does not directly arise, from the modular data. We mention e.g.\ 
\cite{CPPR, GMT, MO} and, for some potential applications to quantum field theory, \cite{LM}. 

The program to provide examples of twisted spectral triples 
arising from type $\ty{III}$ representations, and thus exhibiting a highly nontrivial modular structure, is carried out in \cite{FS} for  noncommutative 2-tori.
In this paper, it was considered irrational rotations by the angle $2\pi\a$, provided that $\a$ is a Liouville number, that is an irrational number admitting a ``fast" approximation by rationals. 

Since the previous mentioned paper, as well as the present one, can be considered as a kind of generalisation of \cite{CM}, we denote the spectral triples considered here again as modular, with the aim to point out the fact that the (highly nontrivial) modular structure plays a crucial role.
 
Such a preliminary but essential step to construct non-type $\ty{II_1}$ representations of the noncommutative torus is done by considering suitable 
$C^\infty$-diffeomorphisms $\mathpzc{f}$ of the unit circle $\bt$. Then 
the \emph{Gel'fand-Naimark-Segal} (GNS for short) representations of the $C^*$-algebra $\ba_{\a}$ of the noncommutative torus generated by states $\om_\mathpzc{f}\in\cs(\ba_\a)$ with central support in the bidual, and canonically associated to the diffeomorphism $\mathpzc{f}$,  provide the searched type $\ty{III}$ representations.

The modular spectral triples constructed in \cite{FS}, whose associated twisted commutator is automatically defined to be ``real" in its own domain, satisfy the following basic requirements. The first one concerns the associated twist of the Dirac operator and the relative twisted derivation, which should come from the Tomita  modular operator. We remark  that, in the non type $\ty{II_1}$ cases, the modular operator is unbounded and not inner, in particular it should not come from a bounded inner perturbation. 

According to the standard definition of any spectral triple, such a suitably deformed Dirac operator should have compact resolvent, and in addition the unbounded twisted derivation should include in its domain sufficiently many elements (e.g.\ typically a dense $*$-subalgebra or, as considered in the recent paper \cite{CS}, merely an essential {\it operator system}). We note that the examples of modular spectral triples constructed in \cite{CM} for the noncommutative 2-torus arise from bounded (indeed ``smooth") inner perturbations of the canonical trace, and thus provide only type $\ty{II_1}$ GNS representations.

The previous facts are relevant because they constitute the minimal requirements for a spectral triple to encode the main metric properties and the associated measure theoretic ones, that is the noncommutative counterpart of the volume form, i.e.\ a state.
Moreover, the spectral triple may also furnish the possible pairing with the K-Theory and the equivariant K-Theory, and thus the corresponding local index formula for which the novel ingredient associated to the non triviality of the modular data may play a crucial role.

Concerning the last aspect involving pairings with K-theory and K-ho\-mo\-lo\-gy, the concept of {\it Fredholm module} plays a crucial role, see e.g.\ \cite{C0, Had}, and the literature cited therein. Therefore, it is natural to address the investigation about Fredholm modules arising from such new spectral triples, suitably deformed by the use of the modular data. 

By the considerations set out above, it is natural to expect that such Fredholm modules should be again deformed according to the underlying modular structure. 
Hence, with $D_\ct$ and $\ce_\ct$ the 
deformed Dirac operator and the operator system in the domain of the deformed commutator 
associated to the modular spectral triple $\ct$, we provide a definition of a deformed commutator $[D_\ct|D_\ct|^{-1}, A]_\ct$ between the ``sign" $F_\ct=D_\ct|D_\ct|^{-1}$ of the Dirac operator and the elements $A$ of $\ce_\ct$. According to such a definition, this deformed commutator differs from the usual one also in the tracial case when the Tomita modular operator is trivial.

Under some technical assumptions about the domains of the involved unbounded operators, for the deformed commutator in the definition of the Fredholm module, and for $d_\ct$ the deformed derivation associated to $\ct$, 
we show that 
\begin{equation}
\label{basic}
[D_\ct|D_\ct|^{-1}, A]_\ct=|D_\ct|^{-1}d_\ct(A)+ d_\ct(A)|D_\ct|^{-1}\,,
\end{equation}
and thus it still produces compact operators.

We want to point out that the ``quantum differential" $1/|D_\ct|$ appears in a symmetric manner in the above formula. In addition, in order to obtain such a symmetric displacement of the quantum differential, the Fredholm modules associated to undeformed spectral triples arising from the tracial cases should also deformed in a canonical way.

It would then be expected that such a new definition of 
Fredholm module may be more suitable for the investigation of some relevant noncommutative manifolds in which the nontrivial modular structure might play a crucial role.
Similarly, such objects arising from modular spectral triples and deformed Fredholm modules (e.g.\ the cohomologies), should take into account the non-cyclicity associated to the presence of the modular operator. Therefore, they might be new objects which have to be identified yet. For general details concerning previous investigations involving these aspects of cyclic cohomology, the reader is referred to the monograph \cite{C0}, and the survey paper \cite{K}.
\vskip.3cm

The present paper is organised as follows. Apart from the introduction and a preliminary section in which some basic notions are reported, in Section \ref{mstripa} we provide a slightly modified definition (w.r.t.\ Definition 2.4 in \cite{FS}) of modular spectral triple which, according to \cite{CS}, takes also into account the possibility to deal with operator systems.

Section \ref{modstr} is devoted to show that the models arising from non-type $\ty{II_1}$ representations of the noncommutative 2-torus described in Sections 7 and 8 of \cite{FS}, indeed provide nontrivial examples of modular spectral triples according to Definition \ref{mstgl}. 

Section \ref{spqu} deals with the new definition of Fredholm module for which, under natural assumptions on domains of the involved unbounded operators, we prove the key-formula \eqref{basic}. We also prove that all above mentioned examples for the noncommutative torus enjoy the properties of providing deformed Fredholm modules. 

Section \ref{ex} concerns with a simple but illustrative one dimensional example. Compared with the twin one in Section 3.1 of \cite{FMR}, such an example helps the reader to clarify the framework. 

In Section \ref{oeost}, it is outlined that the combination of examples in Section 9 of \cite{FS} and those in Section 7 of \cite{FH}, indeed providing deformed Fredholm modules. We are leaving to the reader the involved technical computations because those do not add anything else to the conceptual meaning of the topic. 
For some of such examples of deformed Fredholm modules, their domains contain a dense $*$-algebra, but the price to pay is that the intrinsic undeformed Dirac operator must be suitably modified by using the growth sequence of the diffeomorphism $\mathpzc{f}$, hence according to the modular state $\om_\mathpzc{f}$, depending on $\mathpzc{f}$ and
entering in the definition of spectral triple. Therefore, the arising Fredholm modules are losing some degree of intrinsicity.  

Since the detailed knowledge of the spectrum of the Dirac operator plays a fundamental role, we added Section \ref{hkil} in which the crucial problem to diagonalise such deformed Dirac operators is equivalent to find all periodic solutions of a class of eigenvalue equations, firstly introduced by G. W. Hill in \cite{Hi}. 
\vskip.3cm

We end by mentioning some further investigations (i.e.\ metric aspects, spectral geometry, index theory, cohomologies and many others) involving modular spectral triples and deformed Fredholm modules considered in the present paper. The reader is referred to \cite{C0} and the survey-paper \cite{CPR} for the corresponding objects arising from the tracial (i.e.\ untwisted) standard situation. Therefore, it should be desirable 
to come back to these aspects somewhere else in future.

\section{Preliminaries}
\label{prrrr}

We gather here some facts useful for the forthcoming sections.
\smallskip

\noindent
\textbf{Basic notions.}
We denote by $\bt$ the unit circle 
$$
\{z\in\bc\mid \text{s.t.}\, |z|=1\} \sim\{\th\in[0,2\pi)\mid 0\,\,\text{is identified with}\,2\pi\}\,,
$$
where $z=e^{\imath\th}$.
It is a compact group whose Haar measure is the normalised Lebesgue measure
$$
\mathpzc{m}:=\frac{\di\th}{2\pi}=\frac{\di z}{2\pi\imath z}\,,\quad z=e^{\imath \th}\in\bt\,.
$$

If $Y$ is a point-set, $\cb(Y)$ denotes the unital $C^*$-algebra consisting of all bounded complex-valued functions on $Y$.
\smallskip

Given an irrational number which is not diophantine, it is possible to prove that it is a Liouville number, that is it satisfies the condition:
\begin{itemize}
\item[{\bf (L)}] A {\it Liouville number} $\a\in(0,1)$ is a real number such that for each $N\in\bn$ the inequality 
\begin{equation*}
\left|\a - \frac{p}{q}\right|< \frac{1}{q^N}
\end{equation*}
has an infinite number of solutions for $p,q\in\bn$ with $(p,q)=1$.
\item[{\bf (UL)}] Among Liouville numbers, we also consider those such that, for each $\l>1$ and $N\in\bn$, the inequality
\begin{equation*}
\left|\a - \frac{p}{q}\right|<\frac{1}{\l^{q^N}}
\end{equation*}
again admits infinite number of solutions for $p,q$ as above.
\end{itemize}
The numbers satisfying {\bf (UL)}, which have a prescribed faster approximation by rationals, are called {\it ultra-Liouville numbers}.
Concerning the Liouville numbers, the reader is referred to the standard monograph \cite{Ki}. Following the line in \cite{S}, in Section 3 of \cite{FS} it was exhibited a Liouville number satisfying the faster approximation {\bf (UL)}.
\vskip.2cm
\smallskip

For a state $\f\in\cs(\ga):=\ga^*_{+,1}$ on a $C^*$-algebra $\ga$, we denote by $(\ch_\f,\pi_\f,\xi_\f)$ its GNS representation, see 
e.g.\  \cite{T1}.
If the cyclic vector $\xi_\f\in \ch_\f$ is also separating for $\pi_\f(\ga)''$, we call $\f$ a \emph{modular state}. We also note that $\f$ being a modular state is equivalent to $s({\f})\in Z(\ga^{**})$, where $s(\f)$ is the support (of the normal extension) of $\f$ in the bidual $\ga^{**}$, see  e.g.\ \cite{NSZ}, pag.\ 15.   

Concerning the Tomita modular theory, we use the notations in \cite{St}. For example, $S_\f$, $J_\f$ and $\D_\f$, with $S_\f=J_\f\D_\f^{1/2}$, denote the Tomita involution, Tomita conjugation and modular operator associated to the modular state $\f\in\cs(\ga)$, respectively.
\smallskip

Let $\idd_\ga\in\gs=\gs^*\subset\ga$ be a, not necessarily closed, self-adjoint subspace of the unital $C^*$-algebra $\ga$ which contains the identity $\idd_\ga$. It is called a (concrete) {\it operator system}.
\medskip

\noindent
\textbf{Noncommutative 2-torus.}
For a fixed $\a\in\br$, the \emph{noncommutative torus} $\ba_{2\a}$ associated to the rotation by the angle $4\pi\a$, is the universal unital $C^*$-algebra with identity $I$ generated by the commutation relations involving two noncommutative unitary indeterminates $U,V$:
\begin{equation}
\label{ccrba}
\begin{split}
&UU^*=U^*U=I=VV^*=V^*V\,,\\
&UV=e^{4\pi\imath\a}VU\,.
\end{split}
\end{equation}
The factor $2$ is considered for the sake of convenience, see below.

We express $\ba_{2\a}$ in the so-called {\it Weyl form}. Indeed, let ${\bf a}:=(m,n) \in \bz^2$  
with $\textbf{0}=(0,0)$, and define
$$
W({\bf a}):=e^{-2\pi\imath\a\, mn}U^mV^n,\quad {\bf a}\in\bz^2\,.
$$
Obviously, $W({\bf 0})=I$, and the commutation relations \eqref{ccrba} become
\begin{equation}
\label{ccrba1}
\begin{split}
&W({\bf a})W({\bf b})=W({\bf a}+{\bf b})e^{2\pi\imath\a\, \s({\bf a},{\bf b})}\,,\\
&W({\bf a})^*=W(-{\bf a}),\quad {\bf a}, {\bf b}\in\bz^2\,,
\end{split}
\end{equation}
where the symplectic form $\s$ is defined by
$$
\s({\bf a},{\bf b}):=(mq-pn),\quad {\bf a}=(m,n), \,{\bf b}=(p,q) \in\bz^2\,.
$$

We now fix a function $f:\bz^2\longrightarrow\bc$, which we may assume to have finite support. The element $W(f)\in\ba_{2\a}$ is then defined as
$$
W(f):=\sum_{{\bf a}\in\bz^2}f({\bf a})W({\bf a})\,.
$$
The set $\{W(f)\in\ba_{2\a}\mid f\in\cb(\bz^2)\,\text{with finite support}\}$ provides a dense $*$-algebra of $\ba_{2\a}$. 

For $\a$ irrational, which is tacitly assumed from now on, we recall that $\ba_{2\a}$ is simple and has a necessarily unique faithful trace $\t$ given by
\begin{equation}
\label{canet}
\t(W(f)):=f({\bf 0}),\quad W(f)\in\ba_{2\a}\,.
\end{equation}

Conversely, by Remark 1.7 of \cite{B}, any element $A\in\ba_{2\a}$ is uniquely determined by the corresponding Fourier coefficients
\begin{equation}
\label{zfua}
f({\bf a}):=\t(W(-{\bf a})A),\quad {\bf a}\in\bz^2\,.
\end{equation}

The relations \eqref{ccrba1} transfer to the generators $W(f)$ as follows:
$$
W(f)^*=W(f^\star),\quad W(f)W(g)=W(f*_{2\a} g)\,,
$$
where
\begin{align*}
&f^\star({\bf a}):=\overline{f(-{\bf a})},\\
&(f*_{2\a} g)({\bf a}):=\sum_{{\bf b}\in\bz^2}f({\bf b})g({\bf a}-{\bf b})e^{-2\pi\imath\a\s({\bf a},{\bf b})}\,,
\end{align*}
is a twisted convolution.

Suppose that the double sequence $f({\bf a})\equiv f(m,n)$ satisfies \eqref{zfua} for some $A\in\ba_{2\a}$. 
It is then easily seen that, for each fixed $n\in\bz$, $f^{(n)}(m):=f(m,n)$ defines a sequence whose Fourier anti-transform 
\begin{equation}
\label{zfua22}
\widecheck{f^{(n)}}(z):=\sum_{m\in\bz}f(m,n) z^m\,,
\end{equation}
where the convergence is understood in norm in the Ces\'aro sense (e.g.\ \cite{D}),
provides a continuous function $\widecheck{f^{(n)}}\in C(\bt)$. Here, for ``Fourier anti-transform" we simply mean that in the series \eqref{zfua22}, we are using $z^m$ instead of $z^{-m}$ as in the usual definition of the Fourier transform: $\widecheck{f^{(n)}}(z)=\widehat{f^{(n)}}(z^{-1})$.

\medskip

\noindent
\textbf{On topological dynamical systems on the unit circle.}
If $\mathpzc{f}:Y\to Y$ is an invertible map on the point-set $Y$:
\begin{itemize} 
\item $\mathpzc{f}^0:=\id_Y$, and 
the inverse of $\mathpzc{f}$ is denoted by $\mathpzc{f}^{-1}$;
\item for the $n$-times composition, $\mathpzc{f}^n:=\underbrace{\mathpzc{f}\circ\cdots\circ \mathpzc{f}}_{n-\text{times}}$;
\item for the $n$-times composition of the inverse, $\mathpzc{f}^{-n}:=\underbrace{\mathpzc{f}^{-1}\circ\cdots\circ \mathpzc{f}^{-1}}_{n-\text{times}}$.
\end{itemize}
Thus, $\mathpzc{f}^{n}$ is meaningful for any $n\in\bz$ with the above convention. 
\medskip

We now specialise the matter 
to $Y$ being the unit circle. 
Given a $C^1$-diffeomorphism $\mathpzc{f}$ of the unit circle $\bt$, its \emph{growth sequence} $\{\g_\mathpzc{f}(n)\mid n\in\bn\}$ is defined as
$$
\g_\mathpzc{f}(n):=\|\partial\mathpzc{f}^n\|_\infty\vee\|\partial\mathpzc{f}^{-n}\|_\infty,\quad n\in\bn
$$
where, with a slight abuse of notation and $z=e^{\imath\th}$, $\partial:=\frac{\di\,\,}{\di z}=\frac1{\imath e^{\imath\th}}\frac{\di\,\,}{\di\th}$ is the derivative 
w.r.t. the variable $z$.
\smallskip

To simplify, we reduce the situation to $C^\infty$-diffeomorphisms $\mathpzc{f}$, called simply ``diffeomorphisms". The diffeomorphisms considered in the present paper are topologically conjugate to the rotation $R_\a$, that is
\begin{equation}
\label{dej}
\mathpzc{f}=\mathpzc{h}_\mathpzc{f}\circ R_\a\circ \mathpzc{h}_\mathpzc{f}^{-1},
\end{equation}
for a unique homeomorphism $\mathpzc{h}_\mathpzc{f}$ of $\bt$ with $\mathpzc{h}_\mathpzc{f}(1)=1$.

If $\mathpzc{f}$ is $C^1$-conjugate to the rotation by some angle $2\pi\a$, i.e.\ $\mathpzc{h}_\mathpzc{f}\in C^1(\bt;\bt)$, then the sequence $(\g_\mathpzc{f}(n))_n$ is bounded. 
As pointed out before, this happens for $C^\infty$-diffeomorphisms $\mathpzc{f}$ with \emph{rotation number}
$\r(\mathpzc{f})=\a$ (see e.g.\ Section 11 of \cite{KH} for the definition of rotation number)  whenever $\a$ is diophantine. 

Conversely, suppose that $\b\in(0,1)\backslash\bq$ and $\mathpzc{f}$ is an orientation preserving diffeomorphism with $\r(\mathpzc{f})=\b$. In this case, the Denjoy Theorem (e.g.\  \cite{KH}) asserts that there exists a unique homeomorphism $\mathpzc{h}_\mathpzc{f}$ of the unit circle such that $\mathpzc{h}_\mathpzc{f}(1)=1$ satisfying \eqref{dej} for the rotation $R_\b$. Then
\begin{equation*}
\n_\mathpzc{f}:=(\mathpzc{h}_\mathpzc{f})^*\mathpzc{m}=\mathpzc{m}\circ \mathpzc{h}_\mathpzc{f}^{-1},
\end{equation*}
is the unique invariant measure, which is ergodic for the natural action of $\mathpzc{f}$ on $\bt$. For a diophantine number $\b$, $\mathpzc{h}_\mathpzc{f}$ is indeed smooth (cf.\ \cite{Yo}) and thus $\n_\mathpzc{f}\sim\mathpzc{m}$. For a Liouville number $\b$, things are quite different. There are diffeomorphisms as above for which the unique invariant measure $(\mathpzc{h}_\mathpzc{f})^*\mathpzc{m}$ is singular w.r.t.\ the Haar measure $\mathpzc{m}$: $(\mathpzc{h}_\mathpzc{f})^*\mathpzc{m}\perp \mathpzc{m}$. 
For $\b=2\a$, in order to exhibit non-type $\ty{II_1}$, and then type $\ty{III}$ representations of the noncommutative torus $\ba_{2\a}$,
we will actually look at such diffeomorphisms, that is those satisfying the above mentioned singularity condition.
The use of the factor 2 is pure matter of convenience, and will be clarified in the sequel. 

Note that, for the class of diffeomorphisms $\mathpzc{f}$ considered in the present paper, the asymptotic of $\g_\mathpzc{f}(n)$ is at most $o(n^2)$, see \cite{W}. However, for the diffeomorphisms constructed in \cite{Ma}, Proposition 2.1 when $\a$ is a Liouville number, or in \cite{FS}, Proposition 3.1 for ultra-Liouville numbers, we have $\g_\mathpzc{f}(n)=o(n)$ or 
$\g_\mathpzc{f}(n)=o(\ln n)$, respectively.

It is of interest to note that it would be possible to select Liouville numbers with sufficiently fast approximation by rationals, providing diffeomorphisms of the kind considered in the present paper such that its growth sequence enjoys any unbounded prescribed asymptotic slower than $o(n)$.
\medskip

\noindent
\textbf{Modular states and non-type $\ty{II_1}$ representations.}
We consider an orientation preserving diffeomorphism $\mathpzc{f}$ on $C^\infty(\bt)$ with rotation number 
$\r(\mathpzc{f})=2\a$. Then it is conjugate to the rotation $4\pi\a$ through an essentially unique homeomorphism as in 
\eqref{dej}. 
For such diffeomorphism, we consider a uniquely determined measure 
$\m_\mathpzc{f}$, and take advantage of the construction in  \cite{FS}, Section 4, of a state $\om_\m$ associated to any probability Radon measure 
$\m\in C(\bt)^*$.

Indeed, let $\m\in\cs(C(\bt))$ be a probability measure on $\bt$. As shown in \cite{A}, Proposition 2.1,
\begin{equation}
\label{ommu}
\om_\m(W(f)):=\sum_{m\in\bz}\widecheck{\m}(m)f(m,0)
\end{equation}
is well defined, positive and normalised, so it defines a state on $\ba_{2\a}$. 

Notice that, if $\m$ is the Haar-Lebesgue measure, then 
$$
\widecheck{\m}(k)=\oint z^k\frac{\di z}{2\pi\imath z}=\d_{k,0}\,,
$$
and thus $\om_\mathpzc{m}=\t$, i.e.\ the trace,  by \eqref{ommu} and \eqref{canet}.
\begin{rem}
For the states $\om_\m$, we have
\begin{itemize}
\item[(i)] $\om_\m\in\cs(\ba_{2\a})$ is faithful if and only if $\supp(\m)=\bt$ (cf.\ \cite{FS}, Proposition 4.1);
\item[(ii)] if for $n\in\bz$, $\m\circ R_{2\a}^{n}\preceq\m$ (and thus $\m\circ R_{2\a}^{n}\sim\m$), then the support of $\om_\m$ in the bidual is central: 
$s(\om_\m)\in Z(\ba_{2\a}^{**})$ (cf.\ \cite{FS}, Proposition 4.2).
\end{itemize}
\end{rem}

Now we specialise the situation to
\begin{equation} 
\label{11ommu}
\m_\mathpzc{f}:=\big(\mathpzc{h}_\mathpzc{f}^{-1}\big)^*\mathpzc{m}=\mathpzc{m}\circ\mathpzc{h}_\mathpzc{f}\,,
\end{equation}
which is quasi invariant and ergodic for the natural action of the rotation $R_{2\a}$ by the angle $4\pi\a$, and thus $\pi_{\om_{\m_\mathpzc{f}}}(\ba_{2\a})''$ acts in standard form on 
$\ch_{\om_{\m_\mathpzc{f}}}$. 

Denoting by $\mathpzc{R}$ the dual action of the rotations  $R_{2\a}$ on functions, and in particular on $L^\infty(\bt,\m_\mathpzc{f})$, it is possible to see that such an action is also free (e.g.\ \cite{T1}, pag. 363). Let now $\mathpzc{F}$ be the dual action of $\mathpzc{f}$ on $L^\infty(\bt,\mathpzc{m})$, as well as on $C(\bt)$.

Since the dynamical systems $\big(L^\infty(\bt,\m_\mathpzc{f}),\mathpzc{R}\big)$ and $\big(L^\infty(\bt,\mathpzc{m}),\mathpzc{F}\big)$ are conjugate, we get (cf.\ \cite{FS}, Proposition 6.2)
that $\pi_{\om_{\m_\mathpzc{f}}}(\ba_{2\a})''$ is indeed a von Neumann crossed product (e.g.\ \cite{BR}, Section 2.7):
$$
L^\infty(\bt,\m_\mathpzc{f})\rtimes_{\mathpzc{R}}\bz\sim\pi_{\om_{\m_\mathpzc{f}}}(\ba_{2\a})''\sim L^\infty(\bt,\mathpzc{m})\rtimes_{\mathpzc{F}}\bz\,,
$$
where in addition (cf.\ \cite{T1}, Theorem XIII.1.5, and Corollary XIII.1.6) $\pi_{\om_{\m_\mathpzc{f}}}(\ba_{2\a})''$ is a factor containing the image of $L^\infty(\bt,\m_\mathpzc{f})$, or equivalently that of 
$L^\infty(\bt,\mathpzc{m})$, as a maximal abelian subalgebra. Since $\bz$ is amenable, $\pi_{\om_{\m_\mathpzc{f}}}(\ba_{2\a})''$ is also hyperfinite by \cite{K2a}, Theorem 4.4.

Since $\ba_{2\a}$ is a simple $C^*$-algebra, we also have
$$
C(\bt)\rtimes_{\mathpzc{R}}\bz\sim\ba_{2\a}\sim C(\bt)\rtimes_{\mathpzc{F}}\bz\,,
$$
where the involved 
products are $C^*$-crossed products, see  \cite{BR}, Section 2.7.
\smallskip

Concerning the type of the von Neumann factors we are obtaining, trivially $\pi_{\om_{\m_\mathpzc{f}}}(\ba_{2\a})''$ cannot be of type $\ty{I}$ because $\m_\mathpzc{f}$ is nonatomic. It is of type $\ty{II_1}$ if and only if the measure class $[\m_\mathpzc{f}]$ contains a probability measure which is invariant under the action generated by $R_{2\a}$, see Theorem XIII.1.7 in \cite{T1}. In this case, 
$\m_\mathpzc{f}\sim \mathpzc{m}$ and $\pi_{\om_{\m_\mathpzc{f}}}(\ba_{2\a})''\sim\pi_{\t}(\ba_{2\a})''$ by uniqueness, and this always occurs when $\a$ is diophantine. 

In the remaining cases, if $\pi_{\om_{\m_\mathpzc{f}}}(\ba_{2\a})''$ is not of $\ty{II_1}$, its type is determined by the Connes invariant ${\rm S}\big(\pi_{\om_{\m_\mathpzc{f}}}(\ba_{2\a})''\big)$, or equally well by
the Krieger-Araki-Woods {\it ratio-set} (cf.\ \cite{K2}) $r([\m_\mathpzc{f}],R_{2\a})=r([\mathpzc{m}],\mathpzc{f})$ associated to the underlying commutative dynamical systems
$\big(\bt,R_{2\a},\m_\mathpzc{f}\big)\sim\big(\bt,\mathpzc{f},\mathpzc{m}\big)$,
because they coincide:
\[
{\rm S}\big(\pi_{\om_{\m_\mathpzc{f}}}(\ba_{2\a})''\big)=r([\m_\mathpzc{f}],R_{2\a})=r([\mathpzc{m}],\mathpzc{f})\,,
\]
see e.g.\  \cite{RMPR}, Section 2.

We remark that diffeomorphisms of the prescribed ratio-set are constructed in \cite{Ma} through limits of a suitable sequence of diffeomorphisms conjugate to rational rotations which approximate the Liouville number $\a$.

We want also to note that $\pi_{\om_{\m_\mathpzc{f}}}(\ba_{2\a})''$ is of type $\ty{II_\infty}$ if and only if the measure class $[\m_\mathpzc{f}]$ contains an unbounded $\s$-finite measure which is invariant under the rotation $R_{2\a}$, and it is of type $\ty{III}$ if and only if the measure class $[\m_\mathpzc{f}]$ contains no $\s$-finite measure invariant under such a rotation $R_{2\a}$.
\smallskip

In order to take into account the intrinsic differential structure of $\ba_{2\a}$, and therefore to define the deformed Dirac operator by deforming the untwisted Dirac operator associated to such an intrinsic differential structure, we directly work on the space $L^\infty(\bt,\mathpzc{m})$ dealing with the action $\mathpzc{F}$ induced by the diffeomorphism $\mathpzc{f}$ on measurable complex valued functions on the unit circle $\bt$. 

Also the ``square-root" $T$ of 
$\mathpzc{f}$, given by
\begin{equation}
\label{ccsr}
T:=\mathpzc{h}_\mathpzc{f}\circ R_\a\circ\mathpzc{h}_\mathpzc{f}^{-1}\,,
\end{equation}
plays a crucial role. Indeed, $T$ satisfying \eqref {ccsr} is a canonical choice for the square root of $\mathpzc{f}$: $T^2=\mathpzc{f}$.

Concerning the GNS representation of $\om_{\m_\mathpzc{f}}$, whose support in the bidual is central by construction in the situation of the present paper, it was shown in \cite{FS} that
\begin{equation}
\label{ggnnss666}
\begin{split}
\ch_{\om_{\m_\mathpzc{f}}}&=\ell^2\big(\bz;L^2(\bt, \mathpzc{m})\big)\cong \bigoplus_{\bz}L^2(\bt, \mathpzc{m})\,,\\
(\pi_{\om_{\m_\mathpzc{f}}}(W(f))g)_n(z)&=\sum_{l\in\bz}\left(\widecheck{f^{(l)}}\circ \mathpzc{h}^{-1}_{T^2}\circ T^{2n-l}\right)(z)g_{n-l}(z)\,,\\
(\xi_{\om_{\m_\mathpzc{f}}})_n(z)&=\d_{n,0},\quad z\in\bt,\,\,n\in\bz\,.
\end{split}
\end{equation}

Considering $x\in\ch_{\om_{\m_\mathpzc{f}}}$, for $n\in\bz$ and $z\in\bt$, the related modular structure is then given by
\begin{align*}
(S_{\om_{\m_\mathpzc{f}}}x)_n(z)=&\overline{(x_{-n}\circ T^{2n})(z)}=\overline{(x_{-n}\circ\mathpzc{f}^{n})(z)}\,,\\
(\D_{\om_{\m_\mathpzc{f}}} x)_n(z)=&\frac{z(\partial T^{2n})(z)}{T^{2n}(z)}x_{n}(z)=\frac{z(\partial\mathpzc{f}^{n})(z)}{\mathpzc{f}^{n}(z)}x_{n}(z)\,,\\ 
(J_{\om_{\m_\mathpzc{f}}}x)_n(z)=&\bigg[\frac{z(\partial T^{2n})(z)}{T^{2n}(z)}\bigg]^{1/2}\overline{(x_{-n}\circ T^{2n})(z)}\\
=&\bigg[\frac{z(\partial\mathpzc{f}^{n})(z)}{\mathpzc{f}^{n}(z)}\bigg]^{1/2}\overline{(x_{-n}\circ\mathpzc{f}^{n})(z)}
\end{align*}
where, as before, $\partial=\frac{\di\,\,}{\di z}$ stands for the derivative w.r.t.\ the variable $z\in\bt$.
\smallskip

For $z\in\bt$ and $n\in\bz$, we set
\begin{equation}
\label{rndica}
\d_n(z):=\frac{\di\big(\mathpzc{m}\circ \mathpzc{f}^{n}\big)}{\di \mathpzc{m}}(z)\equiv\frac{\di\big(\mathpzc{m}\circ T^{2n}\big)}{\di \mathpzc{m}}(z)
=\frac{z(\partial T^{2n})(z)}{T^{2n}(z)}\,,
\end{equation}
so that the modular operator is given by 
\begin{equation}
\label{012rndica}
\D_{\om_{\m_\mathpzc{f}}}=\bigoplus_{n\in\bz}M_{\d_n}\,,
\end{equation}
with $M_g$ denoting the multiplication operator by the function $g$.

\section{Modular spectral triples}
\label{mstripa}

We provide a slightly modified definition of modular spectral triple w.r.t.\ that appeared in Definition 2.4 of \cite{FS}. It takes into account the possibility to deal with operator systems as explained in \cite{CS}.  To put in evidence only the topological and the main algebraic aspects, we limit the analysis to spectral triples of dimension 2, by noticing that it can be straightforwardly generalised to an arbitrary dimension.

Let $\ga$ be a unital $C^*$-algebra. With $\ga\ni a\mapsto*(a):=a^*\in\ga$ we denote the star-operation.

\begin{defin}
\label{mstgl}
A spectral triple considered in the present paper, named as {\it modular spectral triple} according to the terminology in \cite{CM}, is a triplet $\ct:=(\om,\cl,\ce)$, where with a slight abuse of notation we write $\ce\equiv\ce_\ct$, made of a modular state $\om\in\cs(\ga)$, an 
operator $\cl:\ca\subset\ga\to\ga$ defined on a unital dense $*$-algebra $\ca\subset\ga$, and finally an operator system $\ce_\ct$ acting on $\cb(\ch_\omega)$, satisfying the following properties. 

First we put $\cl^\star:=-*\cl\,*$ and, for $a\in \ga$, we denote by $a_\om\in \ch_\om$ the natural embedding of $a$ in $\ch_\om$, i.e.\ $a_\om=\pi_\om(a)\xi_\om$. 
\begin{itemize}
\item [(i)]  With $\ch_\om\supset \ca_\om\ni a_\om\mapsto L_\om^\#a_\om:=(\cl^\#a)_\om\subset\ch_\om$, 
where $\cl^\#$ and $L_\om^\#$ stand for $\cl,\cl^\star$ and $L_\om,L^\star_\om$, respectively, the {\it deformed Dirac operator}
\begin{equation}
\label{spqu1}
D^{(o)}_\ct:=\begin{pmatrix} 
	 0 &\D_\om^{-1}L_\om\\
	L_\om^\star\D_\om^{-1}& 0\\
     \end{pmatrix} 
\end{equation}
acting on its own domain $\cd_{D^{(o)}_\ct}\subset\ch_\om\bigoplus\ch_\om$ uniquely defines a self-adjoint operator with compact resolvent: $D^{(o)}_\ct$ is densely defined essentially self-adjoint 
with closure $D_\ct$, and $\big((1+D_\ct)^2\big)^{-1/2}$ is compact.
\item[(ii)] There exists a dense $*$-subalgebra $\cb\subset\ga$, and a set ${\bf P}=(P_\iota)_{\iota\in\ci}\subset\cb(\ch_\om)$ of (increasing) self-adjoint projections with
$\sup_\iota P_\iota=:P\leq I_{\ch_\om}$, such that 
\[
\ce_\ct:={\rm span}\big\{P,P_\iota\pi_\om(\cb)P_\iota\mid\iota\in\ci\big\}\subset P\cb(\ch_\om)P\,,
\]
and for each $A\in\ce_\ct$, the deformed commutator
\begin{equation}
\label{defcomm}
d^{(o)}_\ct(A)
:=\imath\begin{pmatrix} 
	0&\D_\om^{-1}[L_\om,A]\\
	[L_\om^\star,A]\D_\om^{-1}& 0\\
     \end{pmatrix}
\end{equation}
uniquely defines a bounded operator, denoted by $d_\ct(A)$: $\overline{\cd_{d^{(o)}_\ct(A)}}=\ch_\om\bigoplus\ch_\om$ and 
$$
\sup\big\{\big\|d^{(o)}_\ct(A)\xi\big\|\mid \xi\in\cd_{d^{(o)}_\ct(A)},\,\,\|\xi\|\leq1\big\}<+\infty\,.
$$
\end{itemize}
\end{defin}
\begin{rem}
\noindent
\begin{itemize}
\item[(a)] The usual case of undeformed spectral triples associated to tracial states leads to $\ci$ being the singleton $\{\iota_o\}$ with $P_{\iota_o}=I_{\ch_\om}$. The same situation emerges when $\om$ is an ``inner" deformation of a trace considered in \cite{CM}.
\item[(b)] The main case described in \cite{CS} corresponds to $\D_\om=I_{\ch_\om}$ occurring when $\om$ is a trace, and $\ci$ being a singleton with corresponding projection $P< I_{\ch_\om}$. 
In order not to loose any information possibly encoded into the spectral triple, it is natural to consider a set of projections such that $\sup_\iota P_\iota=I_{\ch_\om}$. 
In such a situation, we say that $\ce_\ct$ is \emph{essential}.

\item[(c)] We may decide not to include in $\ce_\ct$ the irrelevant subspace $\bc P$, where  $P=\sup_{\iota\in \ci} P_\iota$. In such a situation, $\ce$ would be
$$
\ce_\ct:={\rm span}\big\{P_\iota\pi_\om(\ca)P_\iota\mid\iota\in\ci\big\}\subset P\cb(\ch_\om)P\,.
$$
\item[(d)] The models in \cite{FS}, Sections 8 and 9, provide modular spectral triples $\ct=(\om,\cl,\ce_\ct)$ associated to a nontrivial modular structure, see Sections \ref{modstr} and 
\ref{oeost} below. Therefore, we indeed have nontrivial examples of spectral triples satisfying all properties listed in Definition \ref{mstgl}.
\end{itemize}
\end{rem}

For a spectral triple $\ct$ as above, we can define a norm on $\ce_\ct$ as follows:
\begin{equation}
\label{ono}
\snorm A \snorm_\ct:=\|A\|+\|d_\ct(A)\|\,,\quad A\in\ce_\ct\,.
\end{equation}

Notice that this norm is nothing but an analogue of a $C^1$-norm on a space of $C^1$-functions, and can be considered as a kind of a Lipschitz norm on the involved operator systems. 
Moreover, since $d_\ct(A^*)=d_\ct(A)^*$ for each $A\in\ce_\ct$, the $*$-operation in $\ce_\ct$
 is isometric also w.r.t.\ the above norm.
\begin{prop}
The completion $\overline{\ce_\ct}$ w.r.t.\ the norm \eqref{ono} of $\ce_\ct$, can be viewed as a, non necessarily closed, operator system in $\cb(\ch_\om)$, and $d_\ct$ uniquely extends to the whole $\overline{\ce_\ct}$, providing again bounded operators.
\end{prop}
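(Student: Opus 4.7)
My plan has three steps: (i) exhibit $\overline{\ce_\ct}$ as a normed subspace of $\cb(\ch_\om)$ by exploiting that $\snorm\cdot\snorm_\ct$ dominates the operator norm; (ii) promote this to an operator-system inclusion by transporting the $*$-operation and the unit $P$; (iii) extend $d_\ct$ by continuity.

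Steps (i) and (iii) are two applications of the bounded-linear-extension principle. By the very definition \eqref{ono}, both the canonical embedding $\iota:\ce_\ct\hookrightarrow\cb(\ch_\om)$ and the deformed derivation $d_\ct:\ce_\ct\to\cb(\ch_\om\oplus\ch_\om)$ are contractions from $(\ce_\ct,\snorm\cdot\snorm_\ct)$ into Banach spaces; hence each admits a unique continuous extension to the completion $\overline{\ce_\ct}$, and the second extension remains bounded-operator valued with $\|d_\ct(A)\|\leq\snorm A\snorm_\ct$. Uniqueness of the extension comes from density of $\ce_\ct$ in $\overline{\ce_\ct}$.

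The non-trivial point is that the extension of $\iota$ is \emph{injective}, so that $\overline{\ce_\ct}$ is truly a subspace of $\cb(\ch_\om)$. Equivalently, one needs $d_\ct$ to be closable as a densely-defined map from $\ce_\ct\subset\cb(\ch_\om)$ into $\cb(\ch_\om\oplus\ch_\om)$: whenever $\|A_n\|\to 0$ and $d_\ct(A_n)\to B$ in operator norm, one must have $B=0$. Since each $d_\ct(A_n)$ is off-diagonal in the $2\times 2$ block decomposition, so is $B$, and its $(1,2)$-entry is probed via
\[
\langle\D_\om^{-1}[L_\om,A_n]\xi,\eta\rangle
=\langle A_n\xi,L_\om^\star\D_\om^{-1}\eta\rangle-\langle A_n L_\om\xi,\D_\om^{-1}\eta\rangle,
\]
valid for $\xi\in\cd_{L_\om}$ and $\eta$ with $\D_\om^{-1}\eta\in\cd_{L_\om^\star}$; an analogous identity covers the $(2,1)$-entry. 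Each term on the right-hand side pairs $A_n$ against a \emph{fixed} vector and therefore tends to $0$ as $\|A_n\|\to 0$, while density of the test-vector sets, ensured by essential self-adjointness of $D^{(o)}_\ct$ together with the spectral properties of $\D_\om$, forces $B=0$.

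With injectivity in hand, the operator-system structure of $\overline{\ce_\ct}\subset\cb(\ch_\om)$ follows easily: the unit $P=\sup_\iota P_\iota$ already lies in $\ce_\ct$, and the $*$-operation is $\snorm\cdot\snorm_\ct$-isometric, as $\|A^*\|=\|A\|$ and $d_\ct(A^*)=d_\ct(A)^*$ by direct inspection of \eqref{defcomm}; hence it extends to an isometric involution on $\overline{\ce_\ct}$ coinciding with the ambient $C^*$-involution. The main technical obstacle is the closability verification inside step (i); once this domain manipulation is in hand, the rest is routine transport of structure through a completion.
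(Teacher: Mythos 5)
Your proposal is correct and follows the same basic route as the paper: the norm $\snorm\cdot\snorm_\ct$ dominates both $\|A\|$ and $\|d_\ct(A)\|$, so a $\snorm\cdot\snorm_\ct$-Cauchy sequence yields operator-norm limits $A$ and $B$, and one sets $j(\widetilde A):=A$, $d_\ct(\widetilde A):=B$. The genuine difference is in emphasis: the paper simply asserts that $j$ is well defined and identifies $\overline{\ce_\ct}$ with $j(\overline{\ce_\ct})$, whereas you correctly isolate the one non-routine point, namely injectivity of $j$, i.e.\ closability of $d_\ct$ as a map from $(\ce_\ct,\|\cdot\|)$ into $\cb(\ch_\om\bigoplus\ch_\om)$, and you supply the weak-formulation argument (pairing $\D_\om^{-1}[L_\om,A_n]$ against fixed test vectors) that forces $B=0$ when $\|A_n\|\to0$. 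This is a worthwhile addition, since without it $d_\ct$ need not descend to a function on the image $j(\overline{\ce_\ct})$. Two caveats: in the integration by parts the operator appearing is the Hilbert-space adjoint $L_\om^{\,*}$ of $L_\om$ in $\ch_\om$, which for a non-tracial $\om$ need not coincide with the paper's $L_\om^\star=(-*\cl\,*)_\om$ (they do coincide in the concrete torus models); and the density of the set of admissible $\eta$ with $\D_\om^{-1}\eta\in\cd_{L_\om^{*}}$, as well as the existence of a common core for the $d^{(o)}_\ct(A_n)$, are extra domain hypotheses not literally contained in Definition \ref{mstgl}, though they hold in all the examples treated. Finally, note that the paper's claim that $j$ is a \emph{homeomorphism} onto its image is stronger than what either argument delivers (the inverse of $j$ is continuous only if $d_\ct$ is operator-norm bounded); you rightly claim only injectivity, which is all the statement requires.
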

\begin{proof}
We start by noticing that, since $\snorm A^* \snorm_\ct=\snorm A \snorm_\ct$, $\overline{\ce_\ct}$ is equipped with a $*$-operation in a natural way, and thus $\overline{\ce_\ct}$ is an (abstract at this stage) operator system as well. 

Let $\widetilde{A}\in\overline{\ce_\ct}$, and choose a sequence $(A_n)_n\subset \ce_\ct$ converging to $\widetilde{A}$ in the topology generated by $\snorm\,\,\, \snorm_\ct$. 
It is a Cauchy sequence w.r.t.\ the norm \eqref{ono}, and therefore both sequences $(A_n)_n\subset\cb(\ch_\om)$ and 
$\big(d_\ct(A_n)\big)_n\subset\cb(\ch_\om\bigoplus\ch_\om)$ 
are Cauchy sequences w.r.t.\ the norms of 
$\cb(\ch_\om)$ and $\cb(\ch_\om\bigoplus\ch_\om)$, respectively. 

Let $A$ and $B$ the limits in $\cb(\ch_\om)$ and $\cb(\ch_\om\bigoplus\ch_\om)$ of the sequences $(A_n)_n$ and $\big(d_\ct(A_n)\big)_n$, respectively,
and define $j(\widetilde{A}):=A$. It is straightforward to verify that such a map is well defined, and realises a homeomorphism between $\overline{\ce_\ct}$ and 
$j\big(\overline{\ce_\ct}\big)\subset\cb(\ch_\om)$. In addition, after putting $d_\ct(\widetilde{A}):=B$, we obtain the required extension of $d_\ct$ to the whole $\overline{\ce_\ct}$
by bounded operators.
\end{proof}
\begin{rem}
By identifying $\overline{\ce_\ct}$ with its image $j\big(\overline{\ce_\ct}\big)\subset\cb(\ch_\om)$, we can suppose that $\overline{\ce_\ct}$, as well as the operators 
$\{d(A)\mid A\in\overline{\ce_\ct}\}$, are directly acting on the Hilbert space $\ch_\om$ and $\ch_\om\bigoplus\ch_\om$, respectively. In particular, $\overline{\ce_\ct}$ can be considered as a concrete operator system.
\end{rem}

To take into account the whole modular structure associated to a spectral triple $\ct$, we define
\begin{equation}
\label{onoun}
\L_\ct:=\begin{pmatrix}
0&L_\om\\
	L^\star_\om& 0\\
     \end{pmatrix}\,,
\end{equation}
acting on its own domain, which is a dense subspace of $\cb(\ch_\om)\bigoplus\cb(\ch_\om)$ and
$$     
     S_\ct:=\begin{pmatrix}
0&S_\om\\
	-S_\om& 0\\
     \end{pmatrix}\,,\quad
     J_\ct:=\begin{pmatrix}
0&J_\om\\
	-J_\om& 0\\
     \end{pmatrix}
$$
which, with $\g=\begin{pmatrix}
I&0\\
	0&-I\\
     \end{pmatrix}$, leads to $J_\ct^2=-I$ and $J_\ct\g=-\g J_\ct$, the last being the prescription of a (real) spectral triple of dimension 2.
     
We also note that, with $\ca_\om=\{a_\om\mid a\in\ca\}$, $\ca$ as in Definition \ref{mstgl},
we have $[\L_\ct,S_\ct]=0$ on the dense subspace 
$\ca_\om\bigoplus\ca_\om$. The above properties can be viewed as the generalisation to the modular case of the tracial one in Definition 3 of \cite{C}, or in Definition 2.1 of \cite{M}. 

We point out that it is not still completely clear the role of the ``charge conjugation" $J_\om$ and the doubled one $J_\ct$ in the context of modular spectral triples.

\section{Modular spectral triples for the noncommutative 2-torus}
\label{modstr} 

The present section is devoted to check in a detailed way that the models described in Section 8 of \cite{FS} provide modular spectral triples which are mainly associated to non-type $\ty{II_1}$ representations as explained above. 
\smallskip

Indeed, we start with a $C^\infty$-diffeomorphism $\mathpzc{f}$ of the unit circle which is conjugate to the rotation $R_{2\a}$ of the angle $4\pi\a$ as in  \eqref{dej} by the unique homeomorphism $\mathpzc{h}_\mathpzc{f}$ satisfying $\mathpzc{h}_\mathpzc{f}(1)=1$.  
To simplify the notations, we put $\om:=\om_{\m_\mathpzc{f}}$ and remark that, according to \eqref{ggnnss666}, $\ch_{\om_{\m_\mathpzc{f}}}$ does not depend on the normal faithful state $\om$, up to unitary equivalence. In particular, it always coincides with $\ch_\t$ associated to the canonical trace $\t$. We then put 
$$
\ch:=\ch_\om=\ch_\t=\bigoplus_\bz L^2(\bt,\mathpzc{m})
$$ 
for all states $\om$ considered in the present paper.

\vskip.3cm

On the Hilbert space
\begin{equation*}
\begin{split}
\ch\bigoplus\ch\equiv&
\bigg(\bigoplus_\bz L^2(\bt, \mathpzc{m})\bigg)\bigoplus\bigg(\bigoplus_\bz L^2(\bt,\mathpzc{m})\bigg) \\ 
=&\bigoplus_\bz\bigg(L^2(\bt,\mathpzc{m})\bigoplus L^2(\bt,\mathpzc{m})\bigg)\,,
\end{split}
\end{equation*}
we put
$$
D^{(\rm und)}:=\begin{pmatrix} 
	 0 &L \\
	L^\star & 0\\
     \end{pmatrix},
$$
the \emph{undeformed Dirac operator},
where $L$ and $L^\star$ 
are the operators acting on the $L^2$-spaces
coming from the corresponding ones $\cl$ and $\cl^\star$ acting on a norm-dense $*$-algebra $\ca\subset\ba_{2\a}$, as explained in (i) of Definition \ref{mstgl}. Indeed,
\begin{equation}
\label{eldcz}
\cl=\partial_1+\imath\partial_2\,,\quad \cl^\star=-\partial_1+\imath\partial_2\,,
\end{equation}
where $\partial_i:=\frac{\partial\,}{\partial\th_i}$, $i=1,2$, are the partial derivatives of functions $g\big(e^{\imath\th_1},e^{\imath\th_2}\big)$ on the 2-torus $\bt^2$ w.r.t. the angles, and for $\ca$ one can make the choice
\begin{equation}
\label{1accam0}
\ca:=\{W(f)\in\ba_{2\a}\mid f\in\cb(\bz^2)\,\text{with finite support}\}\,.
\end{equation}
We also recall that another reasonable choice would be the dense $*$-subalgebra
$
\ca:=\{W(f)\in\ba_{2\a}\mid f\in\cs(\bz^2)\}\,,
$
$\cs$ standing for the rapidly decreasing functions.
\smallskip

After passing to the Fourier anti-transform \eqref{zfua22} w.r.t.\ the $m$-index, \eqref{eldcz} assumes the form
\begin{equation}
\label{eldcz11}
(\partial_1g)_n(z):=\imath z\frac{\di g_n}{\di z}(z),\quad (\partial_2g)_n(z):=\imath ng_n(z),\quad z\in\bt,\,\,n\in\bz.
\end{equation}

After some easy computions, we get
\begin{equation}
\label{unde}
D^{(\rm und)}=\bigoplus_{n\in\bz}D^{(\rm und)}_n=\bigoplus_{n\in\bz}
     \begin{pmatrix} 
	 0 &\big(\imath z\frac{\di\,\,}{\di z}-n I\big)\\
	\left(-\imath z\frac{\di\,\,}{\di z}-n I\right)& 0\\
     \end{pmatrix},
\end{equation}
where $D^{(\rm und)}_n= \begin{pmatrix} 
	 0 &L_n\\
	L_n^\star& 0\\
     \end{pmatrix}$, with 
\begin{equation}
\label{ellcl}
L_n:=\imath z\frac{\di\,\,}{\di z}-n I\,,\,\,L^\star_n:=-\imath z\frac{\di\,\,}{\di z}-n I\,,
\end{equation} 
both acting on their own domains as dense subspaces of $\ch$. 

For the convenience of the reader, we report the spectral resolution of $D:=\overline{D^{(\rm und)}}$, obtaining
$\s(D)=\left\{\pm\sqrt{m^2+n^2}\mid m,n\in\bn\right\}$.
Concerning the eigenspaces, we put $\eps_{00}^{(\pm)} = \frac1{\sqrt2}\begin{pmatrix} 
 1\\
 \pm1\\
\end{pmatrix}$, and 
$$
\eps_{mn}^{(\pm)}(z) = \frac1{\sqrt2}\begin{pmatrix} 
 \frac{\imath m-n}{\sqrt{n^2+m^2}} \\
 \pm1\\
\end{pmatrix}z^m,\quad (m,n)\in\bz^2\backslash\{(0,0)\},\,z\in\bt\,.
$$
With
$$
e_{nm}^{(\pm)}:=\bigoplus_{k\in\bz}\delta_{n,k}\, \eps_{mn}^{(\pm)} \in \ch\bigoplus\ch,
$$
we see that the orthonormal system $\big\{e_{nm}^{(\pm)}\mid m,n\in\bz\big\}$ is a basis for $\ch\bigoplus\ch$ made of eigenvectors of the untwisted Dirac operator $D$ corresponding to the eigenvalues $\pm\sqrt{m^2+n^2}$. Therefore, each of such eigenspaces is generated by the single eigenvector $e_{nm}^{(\pm)}$ whenever $(m,n)\neq(0,0)$. 
The eigenspace corresponding to 0 (i.e.\ the kernel of D) is degenerate, and is generated by the two eigenvectors $e_{00}^{(\pm)}$.
\medskip

The deformed Dirac operator with the twisting determined by the modular operator, is here defined as follows. 
Pointing out the state $\om$ fixed in the sequel, we put
$\G_\om:=\begin{pmatrix} 
	 \D_\om &0\\
	0& I\\
     \end{pmatrix}$,
and define $D^{(o)}_\om:=\G_\om^{-1}D^{(\rm und)}\G_\om^{-1}$. 

The operator $D^{(o)}_\om$ is essentially self-adjoint in its own domain, whose closure $D_\om$ is easily described as follows.        
Denote with $AC(\bt)$ the set of all absolutely continuous complex valued functions on the unit circle.
The Sobolev-Hilbert space $H^1(\bt)$ is given by
$$
H^1(\bt):=\big\{f\in AC(\bt)\mid f'\in L^2(\bt,\mathpzc{m})\big\}.
$$ 

For each $n\in\bz$, we put
$$
\cd_{D_{\om,n}}:=H^1(\bt)\bigoplus H^1(\bt)\subset L^2(\bt,\mathpzc{m})\bigoplus L^2(\bt,\mathpzc{m}),
$$
and define the (unbounded) operators
\begin{equation}
\label{0ndsd0}
 D_{\om,n}:=\begin{pmatrix} 
	 0 &M_{\d^{-1}_n}L_n\\
	L_n^\star M_{\d^{-1}_n}& 0\\
     \end{pmatrix}\,,
\end{equation}
where the $L_n$ and $L_n^\star$ are given in \eqref{ellcl}.

The deformed Dirac operator $D_\om$ is then defined as
\begin{equation}
\label{ndsd}
D_\om:=\begin{pmatrix} 
	 0 &\D_\om^{-1}L\\
	L^\star\D_\om^{-1}& 0\\
     \end{pmatrix}=\bigoplus_{n\in\bz}
     \begin{pmatrix} 
	 0 &M_{\d^{-1}_n}L_n\\
	L_n^\star M_{\d^{-1}_n}& 0\\
     \end{pmatrix}=\bigoplus_{n\in\bz}D_{\om,n}\,,
\end{equation}
on the domain 
$$
\cd_{D_\om}:=\bigg\{\xi\in\bigoplus_{n\in\bz}\cd_{D_{\om,n}}\mid\sum_{n\in\bz}\|D_{\om,n}\xi_n\|^2<+\infty\bigg\}\,.
$$

Notice that $D_{\om,n}$ has compact resolvent for each $n\in\bz$, and in addition is invertible for each $n\neq0$.

For the convenience of the reader, we report the main properties of the deformed Dirac operator summarised in the following
\begin{thm}
\label{dsukz}
The operator $D_\om$ with domain $\cd_{D_\om}$ is self-adjoint. In addition, 
it has compact resolvent if and only if 
\begin{equation}
\label{kop}
\lim_{n\to\infty}\big\|(D_{\om,n})^{-1}\big\|_{\cb(L^2(\bt,\mathpzc{m}))}=0\,,
\end{equation}
and if 
\begin{equation}
\label{kop01}
\g_\mathpzc{f}(n)=o(n)\,.
\end{equation}
\end{thm}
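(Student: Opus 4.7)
My plan is to reduce everything to the fibers $D_{\om,n}$ via the direct sum decomposition \eqref{ndsd}, and then carry out three separate arguments. First, since the integer fiber spaces in \eqref{ggnnss666} are pairwise orthogonal, $D_\om$ is the orthogonal direct sum of the $D_{\om,n}$'s, so the standard direct-sum criterion reduces self-adjointness on $\cd_{D_\om}$ to self-adjointness of each $D_{\om,n}$ on $H^1(\bt)\bigoplus H^1(\bt)$. For the latter, I would observe that $\d_n\in C^\infty(\bt)$ is strictly positive, hence $M_{\d_n^{-1}}$ is bounded self-adjoint; and from \eqref{ellcl} together with integration by parts on $\bt$ (with no boundary term) one checks directly $L_n^*=L_n^\star$ on $H^1(\bt)$. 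The off-diagonal block structure in \eqref{0ndsd0} then makes $D_{\om,n}$ manifestly self-adjoint on its dense domain.

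Next, I would prove the equivalence ``compact resolvent $\iff$ \eqref{kop}''. Because each fiber $D_{\om,n}$ is a bounded perturbation of a first-order elliptic differential operator on the compact manifold $\bt$ (twisted by the bounded multiplier $M_{\d_n^{-1}}$), it has compact resolvent and purely discrete spectrum. For $n\neq 0$ the direct computation with \eqref{ellcl} shows $\ker D_{\om,n}=\{0\}$, so $D_{\om,n}$ is invertible with $\|D_{\om,n}^{-1}\|=\big(\min\sigma(|D_{\om,n}|)\big)^{-1}$. Then the standard fact that an orthogonal direct sum $\bigoplus_n T_n$ of self-adjoint operators with compact resolvents has compact resolvent if and only if $\min\sigma(|T_n-zI|)\to\infty$ (for some, hence any, $z$ in the common resolvent set) yields exactly the condition \eqref{kop}.

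For the sufficient condition $\g_\mathpzc{f}(n)=o(n)\Rightarrow\eqref{kop}$, my strategy is to produce the quantitative bound
$$
\min\sigma(|D_{\om,n}|)\ \geq\ \frac{|n|}{\g_\mathpzc{f}(|n|)}\,.
$$
Squaring, $D_{\om,n}^2$ is the block-diagonal operator with upper-left block $M_{\d_n^{-1}}L_nL_n^\star M_{\d_n^{-1}}$ and lower-right block $L_n^\star M_{\d_n^{-2}} L_n$. A short calculation using \eqref{ellcl} gives $L_nL_n^\star=L_n^\star L_n=-\di^2/\di\th^2+n^2I$, so after integration by parts $\|L_n^\star h\|^2=\int(|h'|^2+n^2|h|^2)\,\di\mathpzc{m}\geq n^2\|h\|^2$. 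Now the key input from \eqref{rndica} is that $|\d_n|=|\partial\mathpzc{f}^n|$ pointwise (both $z$ and $\mathpzc{f}^n(z)$ lie on $\bt$), and by the chain rule $|\d_n^{-1}|=|\partial\mathpzc{f}^{-n}|\circ\mathpzc{f}^n$; hence $\|\d_n\|_\infty,\|\d_n^{-1}\|_\infty\leq\g_\mathpzc{f}(|n|)$. Plugging these estimates into the variational expressions
$$
\langle M_{\d_n^{-1}}L_nL_n^\star M_{\d_n^{-1}}g,g\rangle=\|L_n^\star M_{\d_n^{-1}}g\|^2\,,\qquad \langle L_n^\star M_{\d_n^{-2}}L_n h,h\rangle=\|M_{\d_n^{-1}}L_n h\|^2\,,
$$
together with $\|M_{\d_n^{-1}}g\|^2\geq\|\d_n\|_\infty^{-2}\|g\|^2$, delivers the advertised lower bound on both diagonal blocks, and hence on $D_{\om,n}^2$. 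The hypothesis $\g_\mathpzc{f}(n)=o(n)$ then forces $\|D_{\om,n}^{-1}\|\leq\g_\mathpzc{f}(|n|)/|n|\to 0$.

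The only subtle step is the last one: one must be careful that the multiplier $M_{\d_n^{-1}}$ does not commute with $L_n^\star$, so a naive spectral estimate fails; the saving observation is that the bad factor $\|\d_n\|_\infty$ appearing after the change of variable $h=M_{\d_n^{-1}}g$ in the upper-left block (and analogously in the lower-right block) is precisely controlled by the growth sequence, and the resulting rate $|n|/\g_\mathpzc{f}(|n|)$ is exactly the threshold that the hypothesis $\g_\mathpzc{f}(n)=o(n)$ is designed to beat.
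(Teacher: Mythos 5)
Your proposal is correct and follows essentially the same route as the paper: decompose $D_\om$ into the fibers $D_{\om,n}$, reduce self-adjointness and the compact-resolvent criterion \eqref{kop} to the fibers (the paper proves your "standard fact" directly, by exhibiting $D_\#^{-1}$ as a norm limit of finite direct sums for one implication and a non-precompact orthogonal sequence for the other), and then verify \eqref{kop01}$\Rightarrow$\eqref{kop} via the bound $\|D_{\om,n}^{-1}\|\leq\g_\mathpzc{f}(|n|)/|n|$. The only difference is that the paper cites this last bound from Lemma 7.2 of the reference [FS], whereas your variational argument on the diagonal blocks of $D_{\om,n}^2$, using $\|\d_n^{\pm1}\|_\infty\leq\g_\mathpzc{f}(|n|)$, gives a correct self-contained derivation of it.
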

\begin{proof}
It is matter of routine to check that $D_\om$ is self-adjoint in $\cd_{D_\om}$, see \cite{FS}, Proposition 7.1. Here, we report the details of the proof of Theorem 7.3 in \cite{FS} concerning compactness.

By disregarding the kernel of the Dirac operator, we can suppose that it is invertible. Therefore, $D_\om$ has compact resolvent if and only if $D^{-1}_\om$ is compact. Then it is enough to consider 
$$
D_\#:=\bigoplus_{n\in\bz\smallsetminus\{0\}}D_{\om,n}\,,
$$
which is invertible with inverse
$$
D^{-1}_\#:=\bigoplus_{n\in\bz\smallsetminus\{0\}}D^{-1}_{\om,n}\,.
$$

We now note that, by the form of the operators $D_{\om,n}$ given in \eqref{0ndsd0},
the $D^{-1}_{\om,n}$ are compact operators.

Suppose now that \eqref{kop} holds true, $D^{-1}_\#$ is manifestly compact being norm limit of the sequence of compact operators 
$\big(\bigoplus_{0<|n|\leq N}D^{-1}_{\om,n}\big)_N$.
Conversely, suppose \eqref{kop} were not true. Then there would exist a sequence $(\xi_k)_{k\in\bn}\subset\ch_\om\bigoplus\ch_\om$ of unit vectors such that for $k,l\in\bn$ $k\neq l$,
$$
(D_\#)^{-1}\xi_k\perp (D_\#)^{-1}\xi_l,\quad \big\|(D_\#)^{-1}\xi_k\big\|\geq\eps>0.
$$
Therefore, the sequence $\big((D_\#)^{-1}\xi_k\big)_{k\in\bn}$ is not relatively compact.

Finally, by \cite{FS}, Lemma 7.2, $\big\|D^{-1}_{\om,n}\big\|\leq\frac{\g_\mathpzc{f}(n)}{|n|}$, $n\neq0$. Thus,
if \eqref{kop01} is satisfied, then $\big\|D^{-1}_{\om,n}\big\|=o(1)$ and the second half follows by the first one.
\end{proof}

We now go to exhibit modular spectral triples, some of them arising from non-type $\ty{II_1}$ representations of the noncommutative torus, provided $\a$ is a Liouville number. For such a purpose, define
$$
\cc_0:=\big\{F\circ\mathpzc{h}_{\mathpzc{f}}\mid F\in C^1(\bt)\big\}\subset C(\bt)\,.
$$

For each finite subset $J\subset\bz$, consider the functions $f_k(m,n):=\widehat{F_k}(m)\d_{n,0}$ with $F_k\in \cc_0$, and $g_k(m,n):=\d_{m,0}\d_{n,k}$. We define 
$\cb_0\subset\cb(\bz^2)$ as the set of all functions
\begin{equation*}
f:=\sum_{k\in J}(F_k*_{2\a}g_k),\quad J\subset\bz\,\,\,\text{finite}\,,
\end{equation*}
and put
$$
\ba_{2\a}^{oo}:=\big\{W(f)\mid f\in\cb_0\big\}\,.
$$

In order to obtain a modular spectral triple as in Definition \ref{mstgl}, we set $\ct_\mathpzc{f}:=\big(\om_{\m_\mathpzc{f}},\cl,\ce_{\ct_\mathpzc{f}}\big)$. Here, $\m_\mathpzc{f}$ is the Radon probability measure on $\bt$ defined in \eqref{11ommu}, and $\om$ in the first part is nothing else that $\om_{\m_\mathpzc{f}}$. It is quasi-invariant w.r.t. the multiple of the rotations by the angle $4\pi\a$, and thus 
$\om_{\m_\mathpzc{f}}\in\cs(\ba_{2\a})$ given by \eqref{ommu} has central support in the bidual. 

The intrinsic derivatives $\cl$ and $\cl^\star$ given by \eqref{eldcz}, assuming in our picture the form \eqref{eldcz11},
act on the dense $*$-algebra
$\ca\subset\ba_{2\a}$ given in \eqref{1accam0} providing the required property $\cl^\star=-*\cl*$. It is then possible to get the deformed Dirac operator $D_{\ct_\mathpzc{f}}:=D_{\om_{\m_\mathpzc{f}}}$ given by \eqref{ndsd}. It has compact resolvent if the growth sequence satisfies $\g_\mathpzc{f}(n)=o(n)$, 
see Theorem \ref{dsukz}. The operator system is finally provided by the self-adjoint subspace of $\cb(\ch_{\om_{\m_\mathpzc{f}}})$ given by
$$
\ce_{\ct_\mathpzc{f}}:={\rm span}\{P_N\pi_\om(\ba_{2\a}^{oo})P_N\mid N\in\bn\}\,,
$$
where $P_N$ is the self-adjoint projection in $\cb(\ch_{\om_{\m_\mathpzc{f}}})$ onto 
$\bigoplus_{|n|\leq N}L^2(\bt,\mathpzc{m})$.
Notice that $\ce_{\ct_\mathpzc{f}}+\bc I$ is an operator system,  and $\sup_NP_N=I$. Therefore, $\ce_{\ct_\mathpzc{f}}$ is essential according to Definition \ref{mstgl}.
\begin{rem} 
By \eqref{012rndica}, any $A\in\ce_{\ct_\mathpzc{f}}$ is an entire element w.r.t.\ the adjoint action ${\rm ad}_{\D_{\om_{\m_\mathpzc{f}}}}^{\imath t}$ on 
$\cb\big(\ch_{\om_{\m_\mathpzc{f}}}\big)$ of the one-parameter group generated by the modular operator, see the proof of Theorem \ref{frtor} below.
\end{rem}
\begin{thm}
\label{sczptr} 
The triple $\ct_\mathpzc{f}=\big(\om_{\m_\mathpzc{f}},\cl,\ce_{\ct_\mathpzc{f}}\big)$ is a modular spectral triple, provided \eqref{kop} is satisfied.
\end{thm}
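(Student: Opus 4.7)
My plan is to verify separately that the triple $\ct_\mathpzc{f}=(\om_{\m_\mathpzc{f}},\cl,\ce_{\ct_\mathpzc{f}})$ satisfies the four ingredients of Definition \ref{mstgl}: $\om_{\m_\mathpzc{f}}$ is a modular state, the operator $\cl$ is defined on a unital dense $*$-algebra with $\cl^\star=-*\cl\,*$, property (i) holds for $D_{\ct_\mathpzc{f}}$, and property (ii) holds for every $A\in\ce_{\ct_\mathpzc{f}}$. The first two items are essentially already in place: the modularity of $\om_{\m_\mathpzc{f}}$ follows from the fact, recalled just after \eqref{ommu}, that $\m_\mathpzc{f}\circ R_{2\a}^{n}\sim\m_\mathpzc{f}$ by \eqref{11ommu}, which places $s(\om_{\m_\mathpzc{f}})$ in $Z(\ba_{2\a}^{**})$; and the density of $\ca$ given in \eqref{1accam0} together with the explicit formulas \eqref{eldcz}--\eqref{eldcz11} yields the required structural relation between $\cl$ and $\cl^\star$ as well as the compatibility $L_\om^\star=-L_\om^*$ needed to make sense of \eqref{spqu1}.

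For property (i), I would invoke Theorem \ref{dsukz} directly: under the standing hypothesis \eqref{kop}, the closure $D_{\ct_\mathpzc{f}}=D_{\om_{\m_\mathpzc{f}}}$ of the operator \eqref{ndsd} is self-adjoint and has compact resolvent on the domain $\cd_{D_{\om_{\m_\mathpzc{f}}}}$, which is the only nontrivial spectral requirement of Definition \ref{mstgl}.

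The genuinely substantive step is (ii), and this is where I would spend the most care. For $A=P_N\pi_{\om_{\m_\mathpzc{f}}}(W(f))P_N$ with $f=\sum_{k\in J}F_k*_{2\a}g_k$, $J\subset\bz$ finite, and $F_k=G_k\circ\mathpzc{h}_\mathpzc{f}$ with $G_k\in C^1(\bt)$, formula \eqref{ggnnss666} shows that $A$ acts as a finite collection of multiplication operators by functions of the form $G_k\circ T^{2j-l}$, $|j|,|j-l|\leq N$, $l\in J$. Since each $T^{2j-l}$ is a $C^\infty$-diffeomorphism of $\bt$, these multipliers are $C^1$, and bounded together with their $z$-derivatives uniformly over the finite set of indices $(j,l)$ involved. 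A direct calculation using \eqref{ellcl} shows that, on vectors in a suitable dense domain, $[L_\om,A]$ acts componentwise as finite sums of multiplications by $\imath z\,\partial_z(G_k\circ T^{2j-l})$ plus integer-multiples of the multipliers themselves, so $[L_\om,A]$ extends to a bounded operator; the analogous calculation works for $[L_\om^\star,A]$. Finally, since the cutoff $P_N$ reduces the action of $\D_{\om_{\m_\mathpzc{f}}}^{-1}$ to multiplication by the finitely many continuous functions $\{\d_n^{-1}\}_{|n|\leq N}$ from \eqref{rndica}--\eqref{012rndica} on the compact circle, $\D_{\om_{\m_\mathpzc{f}}}^{-1}$ is bounded on the range of $P_N$, and so $d_{\ct_\mathpzc{f}}^{(o)}(A)$ in \eqref{defcomm} extends to a bounded operator $d_{\ct_\mathpzc{f}}(A)$. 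The case $A=P$ is trivial since $[L_\om,P_N]=0=[L_\om^\star,P_N]$ by the direct-sum structure of $L_\om$ and $L_\om^\star$, hence $d_{\ct_\mathpzc{f}}(P)=0$. Linearity then extends the conclusion to all of $\ce_{\ct_\mathpzc{f}}$, and the essentiality $\sup_N P_N=I$ is obvious from the definition of $P_N$.

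The main obstacle I anticipate is not conceptual but bookkeeping: keeping the shift in the $n$-index produced by $\pi_{\om_{\m_\mathpzc{f}}}(W(f))$ compatible with the outer projections $P_N$, and tracking which $C^1$ bounds survive the composition with $T^{2j-l}$ uniformly over the finitely many relevant indices. Once the cutoff $P_N$ is used to reduce everything to a finite direct sum of $L^2(\bt,\mathpzc{m})\oplus L^2(\bt,\mathpzc{m})$-blocks of the form \eqref{0ndsd0}, each of the three factors $\D_\om^{-1}$, $[L_\om,A]$, $[L_\om^\star,A]$ is manifestly bounded on that finite-dimensional sum of blocks, which yields the desired uniform bound and thus property (ii).
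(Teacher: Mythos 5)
Your proof is correct in outline and reaches the conclusion by essentially the same strategy as the paper: invoke Theorem \ref{dsukz} for property (i), then verify boundedness of the deformed commutator \eqref{defcomm} on the generators of $\ce_{\ct_\mathpzc{f}}$ using the cutoffs $P_N$. The one substantive difference lies in how the bound is obtained. You reduce \emph{everything} to the finitely many blocks selected by $P_N$ and then bound each of the three factors $\D_\om^{-1}$, $[L_\om,A]$, $[L_\om^\star,A]$ separately there; this works, but produces a constant of order $\max_{|n|\leq N}\g_\mathpzc{f}(n)$ for \emph{both} the ``function'' and the ``shift'' parts of a generator $a=x\l^l$. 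The paper instead splits $[L,\pi_\om(a)]$ into the diagonal part $[L,\pi_\om(x)]\l^l$ and the shift part $\pi_\om(x)[L,\l^l]$, and shows via \eqref{rndica} the exact cancellation $\d_n^{-1}\cdot z\,\partial\mathpzc{f}^{n}\,(\partial F)(\mathpzc{f}^{n})=\mathpzc{f}^{n}\,(\partial F)(\mathpzc{f}^{n})$, so that $\big\|\D^{-1}_\om[L,\pi_\om(x)]\big\|\leq\|\partial F\|_\infty$ \emph{uniformly in $n$, with no cutoff needed}; only the shift part requires $P_N$. This cancellation between the modular operator and the unbounded growth of $\partial\mathpzc{f}^{n}$ is the conceptual reason the class $\cc_0=\{F\circ\mathpzc{h}_\mathpzc{f}\mid F\in C^1(\bt)\}$ is chosen as it is, and your block-by-block estimate obscures it even though it suffices for the statement. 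One small inaccuracy to repair: you assert that each $T^{2j-l}$ is a $C^\infty$-diffeomorphism, but $T=\mathpzc{h}_\mathpzc{f}\circ R_\a\circ\mathpzc{h}_\mathpzc{f}^{-1}$ is in general only a homeomorphism, so odd powers of $T$ need not be smooth. This is harmless only because the correct bookkeeping (carrying the phase $e^{2\pi\imath\a mk}$ produced by the twisted convolution $F_k*_{2\a}g_k$ through \eqref{ggnnss666}) shows that the multipliers which actually occur are $G_k\circ T^{2j}=G_k\circ\mathpzc{f}^{j}$, i.e.\ compositions with the smooth iterates of $\mathpzc{f}$ only; you should make that reduction explicit rather than rely on smoothness of odd powers of $T$.
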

\begin{proof}
With $\om=\om_{\m_\mathpzc{f}}$, we start by noticing that the deformed Dirac operator $D_\om$ given in \eqref{ndsd} has compact resolvent if and only if \eqref{kop} is satisfied.
We notice that a sufficient condition for having compact resolvent is $\g_\mathpzc{f}(n)=o(n)$.
Then, we have to verify (ii) in Definition \ref{mstgl}.

Indeed, we have only to
check that $\D_\om^{-1}[L,P_N\pi_\om(\ba_{2\a}^{oo})P_N]$ provides bounded operators on the common dense domain $\pi_\om(\ca)\xi_\om$ for $\ca$ given in \eqref{1accam0}, the other part concerning the anti-diagonal term in \eqref{spqu1}
being similar.

Since $P_N\pi_\om(\ca)\xi_\om\subset \pi_\om(\ca)\xi_\om$ and, on such a dense domain $P_N$ commutes with $L$ and $\D_\om$, we have 
$$
\D^{-1}_\om[L,P_N\pi_\om(a)P_N]=P_N\D^{-1}_\om[L,\pi_\om(a)]P_N\,,
$$
and thus the assertion follows by \cite{FS}, Theorem 8.1. We report those computations for the convenience of the reader.

Indeed, for the linear generator $a=x \l^l$, where $x:=W(\widehat{F}(m)\d_{n,0})$ and $\l^l:=W(\d_{m,0}\, \d_{n,l})$
the $l$-step shift on the direct sum $\bigoplus_\bz L^2(\bt,\mathpzc{m})$, we have
\begin{align*}
\Delta^{-1}_\omega[L,P_N\pi_\omega(a)P_N]=&P_N\Delta^{-1}_\omega[L,\pi_\omega(a)]P_N\\
=&P_N\Delta^{-1}_\omega[L,\pi_\omega(x)]\lambda^l P_N+P_N\Delta^{-1}_\omega\pi_\omega(x)[L,\lambda^l]P_N\,.
\end{align*}     

We note that $\pi_\om(x)$ is diagonal with $\pi_\om(x)_n=M_{F\circ \mathpzc{f}^{n}}$. Then, setting $A=\pi_\om(x)$, $[L,A]$ is diagonal as well, providing 
\begin{equation*}
[L,A]_n(z)=\imath z\frac{\di\,}{\di z}F(\mathpzc{f}^{n}(z))=\imath z\partial\mathpzc{f}^{n}(z)(\partial F)(\mathpzc{f}^{n}(z)).
\end{equation*}

By using \eqref{rndica} and \eqref{012rndica}, we get
$$
(\D^{-1}[L,A])_n(z)=\imath\mathpzc{f}^{n}(z)\partial F(\mathpzc{f}^{n}(z)),
$$
and thus
\begin{equation}
\label{pfircl}
\big\|\D^{-1}_\om[L,\pi_\om(x)]\big\|\leq\sup_{n\in\bz}\|\mathpzc{f}^n\partial F(\mathpzc{f}^n)\|_\infty
=\sup_{n\in\bz}\|\partial F\|_\infty=\|\partial F\|_\infty\,.
\end{equation}

On the other hand, by (8.3) in \cite{FS}, we have
$$
\big\|P_N\D^{-1}_\om[L,\pi_\om(\l^l)]\big\|\leq |l|\|P_N\D^{-1}_\om\|=|l|\max_{n\leq N}\g_\mathpzc{f}(n)\,,
$$
where $\g_\mathpzc{f}(n)$ is the growth sequence of the diffeomorphism $\mathpzc{f}$.

Collecting together, on the domain $\pi_\om(\ca)\xi_\om$ we get
$$
\big\|\D^{-1}_\om[L,P_N\pi_\om(a)P_N]\|\leq \|\partial F\|_\infty+|l|\max_{n\leq N}\g_\mathpzc{f}(n)<+\infty\,.
$$
\end{proof}
\begin{rem}
For all models considered in the present paper, in order to obtain the estimate \eqref{pfircl}, in the Definition \ref{mstgl}
of the modular spectral triple we should use the modular operator $\D_{\om'_{\m_\mathpzc{f}}}=\D^{-1}_{\om_{\m_\mathpzc{f}}}$ of the state 
$\om'_{\m_\mathpzc{f}}=\langle\,{\bf\cdot}\,\xi_{\om_{\m_\mathpzc{f}}},\xi_{\om_{\m_\mathpzc{f}}}\rangle\lceil_{\pi_{\om_{\m_\mathpzc{f}}}(\ba_{2\a})'}$ on the commutant 
$\pi_{\om_{\m_\mathpzc{f}}}(\ba_{2\a})'$. The meaning of such a fact is still unclear.
\end{rem}

\smallskip

As noticed before, the type of the representation $\pi_{\om_{\m_\mathpzc{f}}}$ to which the modular spectral triple is associated, is uniquely determined by the ratio-set $r([\m_\mathpzc{f}],R_{2\a})=r([\mathpzc{m}],\mathpzc{f})$. For $\a$ being a Liouville number and for the diffeomorphisms $\mathpzc{f}$ considered in Section 2 of  \cite{Ma},  by taking into account \cite{FS}, Proposition 3.1 (vi), it is possible to exhibit modular spectral triples arising from non-type $\ty{II_1}$ representations of the noncommutative torus $\ba_{2\a}$. In such a situation, 
$\ce_{\ct_\mathpzc{f}}$ cannot be chosen as a $*$-algebra, except trivial cases, i.e.\ when the involved states come from inner perturbations of the canonical trace.

\section{Deformed Fredholm modules}
\label{spqu}

Concerning the question of whether a modular spectral triple $\ct=(\om,\cl,\ce_\ct)$ determines some kind of Fredholm module, we could conjecture that such Fredholm modules 
themselves have to be consequently deformed. The reader is referred to \cite{MO} for similar computations relative to the usual (i.e.\ undeformed) Fredholm module.
In order to provide a totally symmetric formula for the quantum differential, we find that such a formula is indeed deformed also when the Dirac operator is not deformed, i.e.\ in the tracial case. 
\smallskip

For the sake of simplicity, we tacitly suppose that $D_\ct$ is invertible.
Since $\ker_{D_\ct}$ is finite dimensional, the assumption of invertibility does not affect the substance of the analysis in the present paper, even if the kernel of the Dirac operator encodes some informations which are relevant for other aspects of noncommutative geometry (e.g.\ \cite{C0}).
Therefore, we can reduce the matter to the subspace of finite codimension $P^\perp_{\ker_{D_\ct}}\ch_\om$, on which $D_\ct P^\perp_{\ker_{D_\ct}}$ is invertible.

In such a situation, the polar decomposition for $D_\ct$ leads to the phase (or, more precisely, the sign)  $F_\ct$ 
and the modulus $|D_\ct|$ satisfying
\begin{equation}
\label{phamod}
\begin{split}
 F_\ct|D_\ct|&= D_\ct=|D_\ct|F_\ct,\\
 D_\ct|D_\ct|^{-1}&=F_\ct\ \supset|D_\ct|^{-1}D_\ct.
\end{split}
\end{equation}
Here, the inclusion $F_\ct\ \supset|D_\ct|^{-1}D_\ct$ means that 
$|D_\ct|^{-1}D_\ct$ is closeble on its natural domain $\cd_{D_\ct}$ and $\overline{|D_\ct|^{-1}D_\ct}=F_\ct$.

In order to deal with the (deformed) Fredholm module, we set
$$
\G_\ct:=\begin{pmatrix} 
	 \D_\om &0\\
	0& I\\
     \end{pmatrix},\,\,
     \L_\ct:=\begin{pmatrix}
0&L_\om\\
	L^\star_\om& 0\\
     \end{pmatrix},\,\,
\underline{A}=\begin{pmatrix} 
	A&0\\
	 0&A\\
     \end{pmatrix},\,\, A\in\ce_\ct\,,
$$
as operators acting on $\ch_\om\bigoplus\ch_\om$,  see also \eqref{onoun}.

To avoid technicalities, conceptually inessential at this stage, we start by supposing that,
for each $A\in \ce_\ct$, 
\begin{equation}
\label{trz}
\!\!\!\!\!\!\!\!\!\!\!\text{(iii)}\,\,\G^{-1}_\ct \underline{A}\G_\ct\,\,\text{and}\,\,
\G_\ct \underline{A}\G_\ct^{-1}\,\,\text{are closable with bounded closure}\,.
\end{equation}

Note that, if $\G_\ct^{\pm 1} \underline{A}\G_\ct^{\mp 1}$ is closable with bounded closure
$\overline{\G_\ct^{\pm 1} \underline{A}\G_\ct^{\mp 1}}$, then $ \underline{A}$ is in the domain of the 
analytic continuation of $\br\ni t\mapsto {\rm ad}_{\G_\ct^{\imath t}}\in{\rm aut}\big(\cb(\ch_\om\bigoplus\ch_\om)\big)$ at $t=\mp\imath$.

We then define in its own domain,
\begin{equation}
\begin{split}
\label{formcomm}
[F_\ct,A]&^{(o)}_\ct :=\imath \big(F_\ct\, \overline{\G_\ct \underline{A}\G_\ct^{-1}}-\overline{\G_\ct^{-1}\underline{A}\G_\ct}\, F_\ct\lceil_{\cd_{D_\ct}}\big)\\
+&\imath\big(F_\ct|D_\ct|\, \overline{\G_\ct \underline{A}\G_\ct^{-1}}\, |D_\ct|^{-1}-|D_\ct|^{-1}\,\overline{\G_\ct^{-1}\underline{A}\G_\ct}\,|D_\ct|F_\ct\lceil_{\cd_{D_\ct}}\big)\,. 
\end{split}
\end{equation}
\begin{defin}
\label{mstglFM1}
The modular spectral triple $\ct$ in Definition \ref{mstgl} is called a \emph{deformed Fredholm module} if in addition, for any $A\in \ce_\ct$, the deformed commutator $[F_\ct,A]^{(o)}_\ct$ in 
\eqref{formcomm} uniquely defines  a compact operator denoted by $[F_\ct,A]_\ct$: $\overline{\cd_{[F_\ct,A]^{(o)}_\ct}}=\ch_\om\bigoplus\ch_\om$, 
$$
\sup\big\{\big\|[F_\ct,A]^{(o)}_\ct\xi\big\|\mid \xi\in\cd_{[F_\ct,A]^{(o)}_\ct},\,\,\|\xi\|\leq1\big\}<+\infty\,,
$$
and $\overline{[F_\ct,A]^{(o)}_\ct}=:[F_\ct,A]_\ct\in\ck(\ch_\om\bigoplus\ch_\om)$.
\end{defin}
\begin{thm}
\label{mstglFM2}
Suppose that the modular spectral triple $\ct$ considered in Definition \ref{mstgl}, in addition to $\rm{(iii)}$ in \eqref{trz}, also satisfies the following conditions:
\begin{itemize}
\item[(iv)] $\underline{A}\car_{\G_\ct^{-1}}\subset \cd_{\G_\ct}$, $A\in\ce_\ct$;
\item[(v)] for $\iota\in \ci$, $P_{\iota}\bigoplus P_{\iota}$ strongly commutes with $\G_\ct$, i.e.\ 
$\G_\ct\big(P_{\iota}\bigoplus P_{\iota}\big)\supset\big(P_{\iota}\bigoplus P_{\iota}\big)\G_\ct$, and
$\big(P_{\iota}\bigoplus P_{\iota}\big)\car_{\L_\ct\G_\ct^{-1}}\subset\cd_{\G_\ct^{-1}}$;  
\item[(vi)] $\overline{\G_\ct \underline{A}\G^{-1}_\ct}\cd_{D_\ct}\subset\cd_{D_\ct}$, $A\in\ce_\ct$.
\end{itemize}

Then, for $\ct$,
\begin{equation}
\label{FM commut}
[F_\ct,A]_\ct=|D_\ct|^{-1}d_\ct(A)+d_\ct(A)|D_\ct|^{-1}\,,\quad A\in\ce_\ct\,,
\end{equation}
and thus it provides a deformed Fredholm module.
\end{thm}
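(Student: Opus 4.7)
The plan is to identify formula \eqref{FM commut} as the outcome of symmetrically factoring $|D_\ct|^{-1}$ out of the defining expression \eqref{formcomm}, so that the central core collapses to $d_\ct(A)$ itself. Hypotheses (iii)--(vi) would not add any substance to the computation; they serve solely to guarantee that each step is a legitimate manipulation of densely defined operators whose closures agree and combine into a single bounded extension.

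First I would introduce the bounded operators $X_A:=\overline{\G_\ct\underline{A}\G_\ct^{-1}}$ and $Y_A:=\overline{\G_\ct^{-1}\underline{A}\G_\ct}$, well defined on dense domains by (iii) and (iv). A short block-matrix computation using the explicit shapes of $D^{(o)}_\ct$, $\G_\ct$ and $\L_\ct$ shows that, on the natural dense subspace on which $d^{(o)}_\ct(A)$ is defined,
\begin{equation*}
D^{(o)}_\ct\,\G_\ct\underline{A}\G_\ct^{-1}-\G_\ct^{-1}\underline{A}\G_\ct\, D^{(o)}_\ct=-\imath\, d^{(o)}_\ct(A)\,.
\end{equation*}
Hypothesis (vi) then keeps $X_A$ mapping $\cd_{D_\ct}$ into itself, so that on passage to closures one obtains $D_\ct X_A-Y_A D_\ct=-\imath\, d_\ct(A)$ on $\cd_{D_\ct}$, a bounded identity on that dense subspace.

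Next I would feed the two factorisations $F_\ct=|D_\ct|^{-1}D_\ct\supset D_\ct|D_\ct|^{-1}$ from \eqref{phamod} into \eqref{formcomm}: bracket~1 rewrites on $\cd_{D_\ct}$ as $|D_\ct|^{-1}D_\ct X_A-Y_A D_\ct|D_\ct|^{-1}$, while the two $|D_\ct|$'s flanking bracket~2 collapse via $F_\ct|D_\ct|=D_\ct=|D_\ct|F_\ct$ to give $D_\ct X_A|D_\ct|^{-1}-|D_\ct|^{-1}Y_A D_\ct$. Summing and regrouping, the four terms combine into
\begin{equation*}
|D_\ct|^{-1}\bigl(D_\ct X_A-Y_A D_\ct\bigr)+\bigl(D_\ct X_A-Y_A D_\ct\bigr)|D_\ct|^{-1}\,,
\end{equation*}
so that substituting the central identity from the previous step and multiplying by the prefactor $\imath$ of \eqref{formcomm} delivers \eqref{FM commut} on $\cd_{D_\ct}$. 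Compactness is then automatic: $|D_\ct|^{-1}$ is compact by Definition~\ref{mstgl}(i), since $D_\ct$ is invertible with compact resolvent, and $d_\ct(A)$ is bounded by Definition~\ref{mstgl}(ii), so the right-hand side extends uniquely to a compact operator on $\ch_\om\bigoplus\ch_\om$, which must be $[F_\ct,A]_\ct$.

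The hard part will not be the algebra but the domain bookkeeping. Both brackets in \eqref{formcomm} are genuinely unbounded a priori, and collapsing them into a sum of products of a bounded and a compact operator requires checking that the partial compositions $D_\ct X_A$, $Y_A D_\ct$, $X_A|D_\ct|^{-1}$, $|D_\ct|^{-1}Y_A$, together with their further compositions with $|D_\ct|^{\pm1}$, are all defined on the common dense invariant domain $\cd_{D_\ct}$ and share a single bounded extension. The four technical hypotheses (iii)--(vi) are precisely what is required for each of these closure steps to go through; in particular (v) forces the cutoff projections $P_\iota$ defining $\ce_\ct$ to intertwine with $\G_\ct$ and $\L_\ct\G_\ct^{-1}$, so that the argument descends cleanly to the essential operator system.
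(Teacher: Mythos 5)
Your proposal is correct and follows essentially the same route as the paper's proof: the same rewriting of \eqref{formcomm} via the polar decomposition into the symmetric form $|D_\ct|^{-1}\bigl(D_\ct X_A-Y_AD_\ct\bigr)+\bigl(D_\ct X_A-Y_AD_\ct\bigr)|D_\ct|^{-1}$, the same block computation identifying the central expression with $-\imath\,d^{(o)}_\ct(A)$ via (iv)--(v), and the same closure and compactness argument. (Only a trivial notational slip: the inclusion in \eqref{phamod} goes $F_\ct\supset|D_\ct|^{-1}D_\ct$, not the other way around.)
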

\begin{proof}
By using (iii), (vi) and the polar decomposition \eqref{phamod} of $F_\ct$, from \eqref{formcomm}  we get on the dense domain $\cd_{D_\ct}$,  
\begin{align}
\label{eqcommo}
-\imath [F_\ct,A]^{(o)}_\ct=&|D_\ct|^{-1}\big(D_\ct\, \overline{\G_\ct \underline{A}\G_\ct^{-1}}\,-\overline{\G_\ct^{-1}\underline{A}\G_\ct}\, D_\ct\big)\\
+&\big(D_\ct\, \overline{\G_\ct \underline{A}\G_\ct^{-1}}\,-\,\overline{\G_\ct^{-1}\underline{A}\G_\ct}\, D_\ct\big)|D_\ct|^{-1}. \nonumber
\end{align}

We now check that  $D_\ct\,\overline{\G_\ct \underline{A} \G_\ct^{-1}}- \overline{\G_\ct^{-1} \underline{A} \G_\ct}\,D_\ct\supset d^{(o)}_\ct(A)$. Indeed, by recalling \eqref{spqu1} and using \eqref{defcomm} from Definition \ref{mstgl}, by (iv) and (v) we have
\begin{align*}
-\imath d^{(o)}_\ct(A)=&\G_\ct^{-1}\L_\ct \underline{A}\G_\ct^{-1}-\G_\ct^{-1}\underline{A}\L_\ct \G_\ct^{-1}\\
=&\G_\ct^{-1}\L_\ct \underline{A}\G_\ct^{-1}-\G_\ct^{-1}\underline{A}\big(P_{\iota}\bigoplus P_{\iota}\big)\L_\ct \G_\ct^{-1}\\
=&\G_\ct^{-1}\L_\ct \underline{A}\G_\ct^{-1}-\G_\ct^{-1}\underline{A}\G_\ct\G_\ct^{-1}\big(P_{\iota}\bigoplus P_{\iota}\big)\L_\ct \G_\ct^{-1}\\
=&\G_\ct^{-1}\L_\ct \underline{A}\G_\ct^{-1}-\G_\ct^{-1}\underline{A}\big(P_{\iota}\bigoplus P_{\iota}\big)\G_\ct\G_\ct^{-1}\L_\ct \G_\ct^{-1}\\
=&\G_\ct^{-1}\L_\ct \G_\ct^{-1}\G_\ct\underline{A}\G_\ct^{-1}-\G_\ct^{-1}\underline{A}\G_\ct\G_\ct^{-1}\L_\ct \G_\ct^{-1}\\
=&\left(\G_\ct^{-1}\L_\ct \G_\ct^{-1}\right)\left(\G_\ct\underline{A}\G_\ct^{-1}\right)
-\left(\G_\ct^{-1} \underline{A}\G_\ct\right)\left(\G_\ct^{-1}\L_\ct \G_\ct^{-1}\right)\\
=&D_\ct^{(o)}\left(\G_\ct\underline{A}\G_\ct^{-1}\right)-\left(\G_\ct^{-1} \underline{A}\G_\ct\right)D_\ct^{(o)}\\
\subset&D_\ct\,\overline{\G_\ct \underline{A} \G_\ct^{-1}}- \overline{\G_\ct^{-1} \underline{A} \G_\ct}\,D_\ct\,.   
\end{align*}
Therefore, we find from \eqref{eqcommo},
\begin{equation}
\label{domini}
[F_\ct,A]^{(o)}_\ct\supset|D_\ct|^{-1}d^{(o)}_\ct(A)+d^{(o)}_\ct(A)|D_\ct|^{-1}\,.
\end{equation}
\medskip

We now suppose that $\xi\in \cd_{d^{(o)}_\ct(A)}$. Firstly, $\xi\in \cd_{\G_\ct \underline{A} \G_\ct^{-1}}$,
\[
\overline{\G_\ct \underline{A} \G_\ct^{-1}}\xi= \G_\ct \underline{A} \G_\ct^{-1} \xi \in \cd_{D_\ct^{(o)}}\subset \cd_{D_\ct} \ ,
\]
and 
\[
D_\ct^{(o)}\, \G_\ct \underline{A}\G_\ct^{-1} \xi = D_\ct\, \overline{\G_\ct \underline{A}\G_\ct^{-1}}\xi \ .
\]
Secondly, $\xi\in \cd_{D_\ct^{(o)}}$,  
\[
  D_\ct\xi=D_\ct^{(o)}\xi\in \cd_{\G_\ct^{-1}\underline{A}\G_\ct}\subset\cd_{\overline{\G_\ct \underline{A} \G_\ct^{-1}}} = \ch_\om \bigoplus \ch_\om \  ,
\]
and 
\[
 \G_\ct^{-1} \underline{A}\G_\ct\, D_\ct^{(o)} \xi =  \overline{\G_\ct^{-1} \underline{A}\G_\ct}\, D_\ct \xi \ .
\]

By (ii) in Definition \ref{mstgl} and the previous computations,  
$
\overline{|D_\ct|^{-1} d_\ct^{(o)}(A)}\subset |D_\ct|^{-1} d_\ct(A)
$ and $|D_\ct|^{-1}d_\ct^{(o)}(A)$ is bounded. 
Thus 
$
\overline{|D_\ct|^{-1} d_\ct^{(o)}(A)}=|D_\ct|^{-1} d_\ct(A).
$
At the same time, we also get  
$
\overline{d_\ct^{(o)}(A)|D_\ct|^{-1}}= d_\ct(A)|D_\ct|^{-1}.
$

Collecting together the last computations and \eqref{domini}, 
\begin{align*}
[F_\ct,A]_\ct:=&\overline{[F_\ct,A]^{(o)}_\ct}\supset\overline{|D_\ct|^{-1}d_\ct^{(o)}(A)}+\overline{d_\ct^{(o)}(A)|D_\ct|^{-1}}\\
=&|D_\ct|^{-1} d_\ct(A)+d_\ct(A)|D_\ct|^{-1}\,,
\end{align*}
and thus $[F_\ct,A]_\ct=|D_\ct|^{-1} d_\ct(A)+d_\ct(A)|D_\ct|^{-1}$ because the r.h.s.\ is bounded. Since the r.h.s.\ is also compact, $[F_\ct,A]_\ct$
provides a compact operator for each $A\in\ce_\ct$.
\end{proof}
We note that, when $\om=\t$ is a trace and then $\D_\t=I$, (iv) and (v) in Theorem \ref{mstglFM2} are automatically satisfied, whereas (vi) reduces to
\begin{itemize}
\item[(vi')] $A\cd_{D_\ct}\subset\cd_{D_\ct}$, 
\end{itemize}
which coincides with (1) in Definition 1.1 of \cite{FMR}.
\begin{rem}
The deformed commutator $[F_\ct,A]_\ct$ is also meaningful for $A\in\overline{\ce_\ct}$, providing compact operators acting on $\ch_\om\bigoplus\ch_\om$ as well, again obtaining
$$
[F_\ct,A]_\ct=|D_\ct|^{-1} d_\ct(A)+d_\ct(A)|D_\ct|^{-1}\,,\quad A\in\overline{\ce_\ct}\,.
$$
\end{rem}
\begin{proof}
With $\|\,\,\,\|=\|\,\,\,\|_{\cb(\ch_\om\bigoplus\ch_\om)}$, we consider $\widetilde{A}\in\ce_\ct$, together with a sequence $(A_n)_n\subset\ce_\ct$ converging to $\widetilde{A}$ in the norm $\snorm\,\,\,\snorm_\ct$. 
Since 
\begin{align*}
\big\|[F_\ct,A_n]_\ct-[F_\ct,A_m]_\ct\big\|
&=\big\|[F_\ct,(A_n-A_m)]_\ct\big\|
\leq 2\|D_\ct^{-1}\|\|d_\ct(A_n-A_m)\|\\
&\leq2\|D_\ct^{-1}\|\snorm A_n-A_m\snorm_\ct\to0\,,
\end{align*}
the sequence $\big([F_\ct,A_n]_\ct\big)_n\subset\ck(\ch_\om\bigoplus\ch_\om)$ is a Cauchy sequence which converges to a compact operator as $\ck(\ch_\om\bigoplus\ch_\om)$ is complete. Since such a limit does not depend on the sequence converging to $\widetilde{A}$ in the norm $\snorm \,\,\,\snorm_\ct$, it uniquely defines a compact operator which we still call 
$[F_\ct,\widetilde{A}]_\ct$.
\end{proof}
We point out that, differently to the usual case, the ``quantum differential" $1/|D_\ct|$ appears symmetrically in the formula defining $[F_\ct,A]_\ct$. 
Also for the undeformed spectral triples, that is when $\om$ is a trace, this leads to the additional deformed term $\big(F_\ct|D_\ct|A|D|_\ct^{-1}-|D_\ct|^{-1}A|D_\ct|F_\ct\big)$ to the usual commutator $[F_\ct,A]$.
\medskip

We now provide nontrivial examples associated to the modular spectral triples considered in the present paper for the noncommutative 2-torus, which are indeed deformed Fredholm modules.

\begin{thm}
\label{frtor}
The spectral triples $\ce_{\ct_\mathpzc{f}}$ given in Theorem \ref{sczptr} are indeed deformed Fredholm modules.
\end{thm}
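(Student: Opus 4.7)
The plan is to invoke Theorem \ref{mstglFM2} by verifying its hypotheses (iii)--(vi) for every $A = P_N \pi_{\om_{\m_\mathpzc{f}}}(a) P_N$ with $a\in\ba_{2\a}^{oo}$, and then extending the conclusion to the linear span $\ce_{\ct_\mathpzc{f}}$ by linearity. The two structural features that make the verification tractable are: (a) the modular operator $\D_{\om_{\m_\mathpzc{f}}}=\bigoplus_{n\in\bz} M_{\d_n}$ in \eqref{012rndica} is diagonal in the direct-sum decomposition of $\ch$ and acts fibrewise as a multiplication; (b) the projections $P_N$ are the orthogonal projections onto $\bigoplus_{|n|\le N} L^2(\bt,\mathpzc{m})$, hence correspond to cutting off the direct-sum index, and for $a\in\ba_{2\a}^{oo}$ the representation $\pi_{\om_{\m_\mathpzc{f}}}(a)$ is a \emph{finite} sum of terms each combining a bounded multiplication on $L^2(\bt,\mathpzc{m})$ with a finite shift in the direct-sum index, by the explicit formula \eqref{ggnnss666}.

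First I would check (v). Since $P_N$ acts on the direct-sum index while $\G_{\ct_\mathpzc{f}}$ acts fibrewise, $P_N\bigoplus P_N$ commutes strongly with $\G_{\ct_\mathpzc{f}}$: vectors whose finitely many $|n|>N$ components vanish are automatically in $\cd_{\G_{\ct_\mathpzc{f}}}$ iff their $|n|\le N$ components are, and $\G_{\ct_\mathpzc{f}}$ preserves this decomposition. The second part of (v), namely $(P_N\bigoplus P_N)\,\car_{\L_{\ct_\mathpzc{f}}\G_{\ct_\mathpzc{f}}^{-1}}\subset\cd_{\G_{\ct_\mathpzc{f}}^{-1}}$, follows from the same observation together with the fact that $\G_{\ct_\mathpzc{f}}^{-1}=\bigoplus_n M_{\d_n^{-1}}\oplus I$ is bounded on each finite block $\bigoplus_{|n|\le N}L^2(\bt,\mathpzc{m})$, because each $\d_n$ is a strictly positive smooth function on the compact circle $\bt$ (a finite composition of derivatives of the $C^\infty$-diffeomorphism $\mathpzc{f}$, cf.\ \eqref{rndica}), hence bounded away from $0$ and $\infty$.

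Next I would verify (iii), (iv), (vi) by a single unifying observation. For the compression $A=P_N\pi_{\om_{\m_\mathpzc{f}}}(a)P_N$, the conjugated operator $\G_{\ct_\mathpzc{f}}\underline{A}\,\G_{\ct_\mathpzc{f}}^{-1}$ acts non-trivially only on the finite-dimensional block of indices $|n|,|n-l|\le N$ (where $l$ ranges over the finite support of the shift part of $a$), and on that block its matrix entries are of the form $M_{\d_n\d_{n-l}^{-1}}\circ(\text{bounded multiplication})$ together with a shift, plus the harmless direct summand acting trivially via $\G_{\ct_\mathpzc{f}}$'s second diagonal entry $I$. Since each ratio $\d_n/\d_{n-l}\in C^\infty(\bt)$ is bounded with bounded inverse, $\G_{\ct_\mathpzc{f}}\underline{A}\,\G_{\ct_\mathpzc{f}}^{-1}$ is bounded on its natural domain and its closure is a bounded operator; the symmetric computation handles $\G_{\ct_\mathpzc{f}}^{-1}\underline{A}\,\G_{\ct_\mathpzc{f}}$, giving (iii). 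Condition (iv) $\underline{A}\,\car_{\G_{\ct_\mathpzc{f}}^{-1}}\subset\cd_{\G_{\ct_\mathpzc{f}}}$ reduces to checking that multiplication by the smooth bounded kernels of $\pi_{\om_{\m_\mathpzc{f}}}(a)$ maps the algebraic range of $\G_{\ct_\mathpzc{f}}^{-1}$ into the domain of $\G_{\ct_\mathpzc{f}}$, which is immediate because the whole computation again takes place inside a finite block where the $\d_n$ are smooth invertible. Finally (vi) $\overline{\G_{\ct_\mathpzc{f}}\underline{A}\,\G_{\ct_\mathpzc{f}}^{-1}}\,\cd_{D_{\ct_\mathpzc{f}}}\subset\cd_{D_{\ct_\mathpzc{f}}}$ is the statement that the bounded operator obtained in step (iii) preserves the Sobolev-type domain $\cd_{D_{\ct_\mathpzc{f}}}$ described after \eqref{ndsd}. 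This follows because each matrix entry is multiplication by a $C^\infty(\bt)$ function (namely a product of the smooth kernels from $\pi_{\om_{\m_\mathpzc{f}}}(a)$ with ratios $\d_n/\d_{n-l}$) followed by a finite shift in the index, and multiplication by a $C^\infty$ function preserves $H^1(\bt)$.

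Having verified (iii)--(vi), Theorem \ref{mstglFM2} directly yields the key-formula
$$
[F_{\ct_\mathpzc{f}},A]_{\ct_\mathpzc{f}}=|D_{\ct_\mathpzc{f}}|^{-1}d_{\ct_\mathpzc{f}}(A)+d_{\ct_\mathpzc{f}}(A)|D_{\ct_\mathpzc{f}}|^{-1}\,,\quad A\in\ce_{\ct_\mathpzc{f}}\,,
$$
and compactness of the right-hand side follows from boundedness of $d_{\ct_\mathpzc{f}}(A)$ (already established in the proof of Theorem \ref{sczptr}) combined with compactness of $|D_{\ct_\mathpzc{f}}|^{-1}$ (which is equivalent to compactness of the resolvent of $D_{\ct_\mathpzc{f}}$ established in Theorem \ref{dsukz}). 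I expect the main subtlety to be the careful bookkeeping of the domains of the unbounded operators $L$, $L^\star$, $\D_{\om_{\m_\mathpzc{f}}}^{\pm 1}$ when written as compositions: the finite-index truncation by $P_N$ is precisely what trivialises this difficulty, turning every product of unbounded operators into a finite matrix of bounded multiplication operators acting on the fixed finite direct sum $\bigoplus_{|n|\le N} L^2(\bt,\mathpzc{m})$.
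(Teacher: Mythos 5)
Your proposal is correct and follows essentially the same route as the paper: both proofs rest on the observation that the cut-off $P_N$ confines everything to a finite block $\bigoplus_{|n|\le N}L^2(\bt,\mathpzc{m})$ on which $\D_{\om_{\m_\mathpzc{f}}}^{\pm1}$ is bounded, so that (iii)--(v) become elementary and (vi) reduces to the fact that multiplication by the (at least $C^1$, not necessarily $C^\infty$) kernels of $\pi_{\om}(a)$ and by the ratios $\d_n/\d_{n-l}$ preserves $H^1(\bt)$, after which Theorem \ref{mstglFM2} gives the formula and compactness. The only presentational difference is that the paper packages your matrix-entry estimate for (iii) as the statement that $\underline{A}$ is an entire element for ${\rm ad}_{\G_\om^{\imath t}}$, via $\big\|\G_\om^{\imath z}\underline{A}\G_\om^{-\imath z}\big\|\le\big(\max_{n\leq N}\g_{\mathpzc{f}}(n)\big)^{2|z|}\|A\|$.
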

\begin{proof}
To simplify, we put $\om:=\om_{\m_\mathpzc{f}}$, and note that (iv) and (v)  in Definition \ref{mstglFM1} are easily achieved by \eqref{012rndica} and \eqref{unde}.
\smallskip

We now argue that, by \eqref{012rndica} and \eqref{ndsd},
\begin{align}
&\G_\om\big(P_N\bigoplus P_N\big)=\big(P_N\bigoplus P_N\big)\G_\om\big(P_N\bigoplus P_N\big)=\big(P_N\bigoplus P_N\big)\G_\om\,,\label{line1}\\
&D_\om\big(P_N\bigoplus P_N\big)=\big(P_N\bigoplus P_N\big)D_\om\big(P_N\bigoplus P_N\big)=\big(P_N\bigoplus P_N\big)D_\om\,.\label{line2}
\end{align}
Since $\big\|\G_\om\big(P_N\bigoplus P_N\big)\big\|=\max_{n\leq N}\g_\mathpzc{f}(n)$, by \eqref{line1} we have for $A\in\ce_{\ct_\mathpzc{f}}$
$$
\big\|\G_\om^{\imath z}\underline{A}\G_\om^{-\imath z}\big\|\leq\Big(\max_{n\leq N}\g_\mathpzc{f}(n)\Big)^{2|z|}\|A\|<+\infty\,.
$$
Therefore, $\underline{A}$ is an entire element for the group ${\rm ad}_{\G_\om^{\imath t}}$, and thus (iii) in \eqref{trz} is satisfied.

Concerning (vi), we reason as in the proof of Theorem \ref{sczptr}. 
Indeed, for the linear generator $a=x\l^l$, with $x=W(\widehat{F}(m)\d_{n,0})$ and $\l^l=W(\d_{m,0}\d_{n,l})$ as above, we first note that $P_N$ commutes with $\pi_\om(x)$. Therefore, we reduce the matter when $A$ is either $P_N\pi_\om(x)$ or $P_N\pi_\om(\l)^lP_N$.

By \eqref{line2}, and taking into account that the $\d_n$ and $F$ are $C^1$-function on $\bt$, we easily get (vi) for such a situation. For the case $A=P_N\pi_\om(\l)^lP_N$, we also obtain the assertion
after applying again \eqref{line2}. Indeed, if $\xi_k\in H^1(\bt)\bigoplus H^1(\bt)$ and
$$
\xi:=\bigoplus_{n\in\bz}\d_{n,k}\xi_k\in\cd_{D_\om}\subset \bigoplus_\bz\big(L^2(\bt,\mathpzc{m})\bigoplus (L^2(\bt,\mathpzc{m})\big)\,,
$$
since the $\d_n$ are smooth, $\pi_\om(\l)^l\xi=\bigoplus_{n\in\bz}\d_{n,k-l}\xi_k$ is in the domain of $D_\om$ as well.
\end{proof}

\section{A simple illustrative example}
\label{ex}

We briefly discuss a simple example of one dimensional spectral triple concerning the unit circle $\bt$ for which the involved $C^*$-algebra is $C(\bt)$. This example is borrowed from \cite{FMR}, Section 3.1 where, indeed, it was instead considered the algebra $C([0,1])$.
Clearly, the Dirac operator 
$D:=z\frac{\di\,\,}{\di z}=-\imath\frac{\di\,\,}{\di \th}$ 
is nothing else than the ``square root" of the (opposite of) the Laplacian $-\frac{\di^2\,\,}{\di\th^2}$ on the circle. 
In such a way, we have an essentially different
example of spectral triple from that considered in the previous mentioned paper, for which the deformed Fredholm module provided in the present paper can be explicitly managed, even for a tracial state.
\smallskip

We start with the multiplication operator $a:=M_f$ for the function $f(z)=z^l$ and $\xi(z)=\sum_k \xi_k z^k\in L^2\left(\bt,\frac{\di z}{2\pi\imath z}\right)$, where the sum is over a finite set.
As explained above, for our purposes we can disregard the kernel of $D$. 

Since the modular operator is trivial, we put $D|D|^{-1}=F=F_\ct$ and compute
\begin{align*}
-\imath [F,a]_\ct\xi=&(Fa-aF)\xi+(F|D|a|D|^{-1}-|D|^{-1}a|D|F)\xi\\
=&(Fa-|D|^{-1}aD)\xi+(Da|D|^{-1}-aF)\xi\\
=&\sum_{\{k\in \bz\mid k,k+l\neq0\}}\bigg[\bigg(\frac{k+l}{|k+l|}-\frac{k}{|k+l|}\bigg)
+\bigg(\frac{k+l}{|k|}-\frac{k}{|k|}\bigg)\bigg]\xi_k z^{k+l}\\
=&\ l\sum_{\{k\in \bz\mid k,k+l\neq0\}}\bigg(\frac1{|k+l|}+\frac1{|k|}\bigg)\xi_k z^{k+l}\\
=&\big(|D|^{-1}[D,a]+[D,a]|D|^{-1}\big)\xi\,.
\end{align*}

We remark that we are obtaining the perfect analogous formula corresponding to Equation \eqref{FM commut}, even for this simple example of dimension one.
\vskip.2cm

For this simple example, the arising spectral triple $\ct\equiv(\ca,\cl,\ce_\ct)$ of dimension 1, which is also a Fredholm module by the just seen computation, would be chosen as follows.
 For the dense $*$-algebra $\ca$, we can choose 
$$
\ca:=\overline{{\rm span}\{z^k\mid k\in\bz\}}^{\snorm\,\,\snorm_\ct}=C^1(\bt)=:\ce_\ct\, ,
$$
where $\snorm\,\,\snorm_\ct$ is the norm in \eqref{ono} corresponding to that of the uniform convergence of functions together with their 1st derivative. Correspondingly, 
$\cl :=D\lceil_\ca$. Finally, the state $\om$ is obviously given by the Haar-Lebesgue measure  $\frac{\di z}{2\pi\imath z}$ on $\bt$. 
\smallskip

We would like to point out that, conversely to the simple model just described above, Example 3.1 in \cite{FMR} might not provide a Fredholm module for both definitions, the usual undeformed one, and the deformed one in Definition \ref{mstgl} adapted to spectral triples of dimension 1. 
\smallskip

Indeed, on $C([0,2\pi])$ with one of the usual self-adjoint Dirac operators $D_\chi$, for $|\chi|=1$, given by $-\imath\frac{\di\,\,}{\di \th}$ with 
$$
\cd_{D_\chi}=\{f\in AC([0,2\pi])\mid f(0)=\chi f(2\pi),\,f'\in L^2([0,2\pi])\}
$$
(e.g.\ Example in \cite{RS}, Section VIII.2), any dense $*$-subalgebra of $C([0,2\pi])$ on which the commutator with $D_\chi$ provides bounded operators acting on $L^2([0,2\pi])$, would contain a smooth function $f$ for which $f(0)\neq f(2\pi)$. 

For such a kind of function $a:=f(z)$, the asymptotic of the Fourier coefficients $\{\hat f(k)\}_{k\in\bz}$ is at most $1/|k|$ or worst. Therefore, the deformed commutator $[F,a]_\ct$ is meaningless. On the other hand, $[F,a]$ corresponding to the usual definition of Fredholm module, and again defined on any appropriate domain, although giving rise to bounded operators, cannot always provide compact ones as shown in \cite{FMR}.

However, it is an interesting fact that, on $C(\bt)$ and for $D=z\frac{\di\,\,}{\di z}$, the associated undeformed Fredholm module $[F,M_f]=[D|D|^{-1},M_f]$ for $\{M_f \mid f(z)=\sum_{|l|\leq K} f_l\, z^l,\,\,K\in \bn\}$ produces finite rank operators, hence compact ones.
Indeed, set $a=M_f$ with $f(z)=z^l$, and $\xi=\sum_{|k|\leq K}\xi_k z^k$ with $K\in \bn$.

To simplify, we first suppose $l>1$ and compute
\begin{equation}
\label{hhhhhh}
\begin{split}
[F,a]\xi=&\sum_{\{k\in \bz\mid k,k+l\neq0\}}\big(\sign(k+l)-\sign(k)\big)\xi_k z^{k+l}\\
=& 2\sum_{k= -l+1}^{-1}\xi_k z^{k+l} = 2\sum_{k= 1}^{l-1}\xi_{k-l} z^k = 2 P_{[1,\,l-1]} \l^l \xi\, ,
\end{split}
\end{equation}
where $\l$ is the one-step shift defined as $(\l \xi)(z)= \sum_k \xi_{k-1} z^k$, and $P_L$ is the self-adjoint projection given by $(P_L \xi)(z) = \sum_{k\in L} \xi_k z^k$.

Now, in order to manage the general case $l\in\bz$, define  
\begin{equation*}
\left\{\begin{array}{ll}
                    \!\!\!\!\!\!\! &L_l=[l+1,-1]\subset \bz, \quad l<-1\,, \\[1ex]
               	\!\!\!\!\!\!\!&L_l=\emptyset, \quad l\in\{-1, 0, 1\}\,, \\[1ex]
               \!\!\!\!\!\!\!	&L_l=[1,l-1]\subset \bz, \quad l>1
                    \end{array}
                    \right.
\end{equation*}
and, by reasoning as in \eqref{hhhhhh}, it is straightforward to check that 
$$
[F, a]\xi=2\sign(l)P_{L_l} \l^l \xi\,.
$$

We would like to point out that, for $a=M_f$ with $f(z)=\sum_{|l|\leq K} f_l\, z^l$ as above, by \eqref{hhhhhh} $|D|[F,a]$ uniquely defines a bounded operator satisfying
$$
\||D|[F,a]\|\leq 2 \sum_{|l|\leq K} |l| |f_l|\,.
$$
Therefore, denoting by $B_a$ the bounded closure of $|D|[F,a]$, we re-obtain the well-known Connes' formula 
$[F,a]= |D|^{-1}B_a$.

\section{Other examples of modular spectral triples}
\label{oeost}

In the present section, we briefly describe other examples of modular spectral triples and deformed Fredholm modules for the noncommutative 2-torus $\ba_{2\a}$, leaving technical details to the reader. 
\smallskip

The first case we are interested in is described in Section 9 of \cite{FS}, and provides spectral triples $\ct$ for which $\ce_\ct$ is a dense $*$-algebra.
Indeed, in order to obtain a  $*$-algebra, we need 
to fix any orientation preserving diffeomorphism $\mathpzc{f}$ of the circle having a rotation number $\r(\mathpzc{f})=2\a$, where $\a$ is a ultra-Liouville number, see Section \ref{prrrr}.
As usual, we use the notation $\ce_{\ct_\mathpzc{f}}$ for the involved $*$-algebra. 

As previously seen in \cite{FS}, Section 9, $\cl$, and consequently $\cl^\star$, should be modified as
\begin{equation}
\label{eldczmu}
\cl=\partial_1+\imath\partial^{(\mathpzc{f})}_2\,,\quad \cl^\star=-\partial_1+\imath\partial^{(\mathpzc{f})}_2\,,
\end{equation}
where $\partial^{(\mathpzc{f})}_2$ is the multiplier defined in its own domain by
$$
\big(\partial^{(\mathpzc{f})}_2g\big)(z,w):=\imath\!\left(\widecheck{a_{{\mathpzc{f}}\!,n}\, \widehat{g_n}}\right)\!(z,w)\,,
$$
with
$$
a_{{\mathpzc{f}}\!,n}=\sign(n)\sum_{l=1}^{|n|}\frac1{\g_{\mathpzc{f}}\Big(l-\frac{1-\sign(n)}2\Big)},\quad n\in\bz\,.
$$
Here, the Fourier transform and anti-transform involve only the 2nd variable $w$, and $\big(\g_\mathpzc{f}(n)\big)_n$ is the growth sequence of the diffeomorphism $\mathpzc{f}$.

Notice that, for these cases, the choice of $\cl$ and $\cl^\star$ heavily depend on $\mathpzc{f}$, and thus on the state $\om_{\m_\mathpzc{f}}$ entering in the Definition \ref{mstgl} 
of the modular spectral triple. The reader is indeed referred to \cite{FS}, Proposition 3.1 and Section 9, for the proofs and further details. 
\smallskip

A second set of examples is proposed in Section 7 of \cite{FH} for each $\eta\in[0,1]$. With an abuse of notation, we denote by $\ct_\eta$ the modular spectral triples under consideration. We then have a parametric family of deformed Dirac operators given by
\begin{equation}
\label{spetr1129}
D_{\ct_\eta}=\mathrm{D}_{\om,\eta}:=\begin{pmatrix} 
	 0 &\D_\om^{\eta-1}\mathrm{L}_\om\D_\om^{-\eta}\\
	\D_\om^{-\eta}\mathrm{L}_\om^\star\D_\om^{\eta-1}& 0\\
     \end{pmatrix},
\end{equation}
defined in their own domains. Here, $\mathrm{L}_\om$ and $\mathrm{L}^\star_\om$ are the operators in \eqref{eldcz} or in \eqref{eldczmu} acting on the $L^2$-space on their own domains, and \eqref{eldcz} can be clearly viewed as a particular case of  \eqref{eldczmu}. 

Correspondingly, for the deformed derivation $d_{\ct_\eta}$ on $\ce_{\ct_\eta}$, 
\begin{equation}
\label{spetr112}
d_{\ct_\eta}(A)=\imath
\begin{pmatrix} 
	0&\D_\om^{\eta-1}[\mathrm{L},A]\D_\om^{-\eta}\\
	\D_\om^{-\eta}[\mathrm{L}^\star,A]\D_\om^{\eta-1}& 0\\
     \end{pmatrix}
\end{equation}
would still provide bounded operators.
\smallskip

For all values of $\eta$ and $\ct_\eta$ the relative spectral triples, if we consider the models associated to \eqref{eldcz}, the construction  can be carried out for all Liouville numbers, and the undeformed Dirac operator $\begin{pmatrix} 
	0&\mathrm{L}_\om\\
	\mathrm{L}^\star_\om& 0\\
     \end{pmatrix}\equiv\begin{pmatrix} 
	0&L_\om\\
	L^\star_\om& 0\\     \end{pmatrix}$ does not depend on the diffeomorphism $\mathpzc{f}$. For this situation, the price to pay is that 
the domain $\ce_{\ct_\eta}$ of the deformed commutator $d_{\ct_\eta}$ is merely an (essential) operator system. 

In the second case when we use \eqref{eldczmu} for the undeformed Dirac operator $\begin{pmatrix} 
	0&\mathrm{L}_\om\\
	\mathrm{L}^\star_\om& 0\\
     \end{pmatrix}$, $\ce_{\ct_\eta}$ is a dense $*$-algebra, but our construction can be performed only for ultra-Liouville numbers {\bf (UL)} as those described in Section \ref{prrrr}. In addition, the undeformed Dirac operators do depend on $\mathpzc{f}$ according to \eqref{eldczmu}, that is they are not intrinsic objects associated to the noncommutative manifold under consideration.
\smallskip

After setting 
$$
\G_{\ct_\eta}:=\begin{pmatrix} 
	 \D^{1-\eta}_\om &0\\
	0&  \D^{\eta}_\om\\
     \end{pmatrix},\quad
     \L_{\ct_\eta}:=\begin{pmatrix}
0&\mathrm{L}_\om\\
	\mathrm{L}^\star_\om& 0\\
     \end{pmatrix},\quad    
     \underline{A}=\begin{pmatrix} 
	A&0\\
	 0&A\\
     \end{pmatrix},\,\, A\in\ce_\ct\,,
$$
we find that \eqref{spetr1129} and \eqref{spetr112} are obtained respectively as
$$
D_{\ct_\eta}=\G_{\ct_\eta}^{-1} \L_{\ct_\eta}\G_{\ct_\eta}^{-1},\quad d_{\ct_\eta}(A)=\imath\big(D_{\ct_\eta} \G_{\ct_\eta}\underline{A}\G^{-1}_{\ct_\eta}-\G_{\ct_\eta}^{-1}\underline{A}\G_{\ct_\eta}D_{\ct_\eta}\big)\,.
$$

It is expected that for all cases described in the present section, we would still get deformed Fredholm modules. Indeed, with $F_{\ct_\eta}$ the phase of $D_{\ct_\eta}$ and disregarding its kernel, similarly to \eqref{formcomm} for the closure  
$[F_{\ct_\eta},A]_{\ct_\eta}$ of
\begin{equation*}
\begin{split}
&\imath \big(F_{\ct_\eta} \G_{\ct_\eta}\underline{A}\G_{\ct_\eta}^{-1}-\G_{\ct_\eta}^{-1}\underline{A}\G_{\ct_\eta}F_{\ct_\eta}\big)\\
&\quad+\imath\big(F_{\ct_\eta}|D_{\ct_\eta}|\G_{\ct_\eta} \underline{A}\G_{\ct_\eta}^{-1} |D_{\ct_\eta}|^{-1} -|D_{\ct_\eta}|^{-1}\G_{\ct_\eta}^{-1}\underline{A}\G_{\ct_\eta}|D_{\ct_\eta}|F_{\ct_\eta}\big)\,,
\end{split}
\end{equation*}
we would obtain $[F_{\ct_\eta},A]_{\ct_\eta}=|D_{\ct_\eta}|^{-1}d_{\ct_\eta}(A)+d_{\ct_\eta}(A)|D_{\ct_\eta}|^{-1}$ as in Theorem \ref{mstglFM2}.

\section{The spectrum of the Dirac operator and the Hill equation}
\label{hkil}

We now want to discuss the, in general difficult, problem to diagonalise the deformed Dirac operator $D_\om$ 
for the noncommutative 2-torus 
considered in the present paper and given in \eqref{ndsd}. 

In Section \ref{modstr}, we have already noted 
that the set of eigenvectors of $D_\om$ generates the whole Hilbert space.
This easily follows as the $D_{\om, n}$ have compact resolvent for any $n\in \bz$, and thus $\s(D_{\om, n})$ are discrete sets. Therefore, $\s(D_\om)=\overline{\bigcup_{n\in \bz}\s(D_{\om, n})}$.
Theorem \ref{dsukz} provides conditions under which $D_\om$ has compact resolvent, and thus
$\s(D_\om)=\bigcup_{n\in \bz}\s(D_{\om, n})$ if this is the case.
\medskip

We show that the characterisation of eigenvalues and eigenvectors of $D_\om$ is given in terms of a particular class of eigenvalue Hill equations.
\smallskip

The undeformed Dirac operator $\begin{pmatrix} 
	 0 &\overline{L} \\
	\overline{L^\star} & 0\\
     \end{pmatrix}$ gives rise to the eigenvalues equation $\Big(\frac{\di^2\,}{\di\th^2}+(\l^2-n^2)\Big)H=0$ for the periodic function $H$ and each fixed $n\in\bz$, providing eigenvalues and eigenvectors given at the beginning of Section \ref{modstr}.
\medskip

Setting $d_n(\th):=\d_n(e^{\imath \th})$, where the $\d_n$ are defined in \eqref{rndica}, the result concerning the Hill equation and the diagonalization of $D_\om$, independently on 
the fact that it has compact resolvent, are summarised as follows.

\begin{thm}
The real number $\l$ is an eigenvector of the deformed Dirac operator \eqref{ndsd} if and only 
if there exists $n\in\bz$ such that the homogeneous Hill equation
\begin{equation}
\label{ndsd3}
H''+\big(\l^2d_n^{2}(\th)-n^2\big)H=0
\end{equation}
admits a periodic $C^2$ (and therefore necessarily $C^\infty$) nontrivial solution.
\end{thm}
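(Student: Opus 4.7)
I plan to exploit the direct-sum decomposition $D_\om=\bigoplus_{n\in\bz}D_{\om,n}$ in \eqref{ndsd} to reduce the eigenvalue problem to a one-parameter family of scalar ODEs. Since the summands $D_{\om,n}$ act on mutually orthogonal copies of $L^2(\bt,\mathpzc{m})\bigoplus L^2(\bt,\mathpzc{m})$, a real number $\l$ is an eigenvalue of $D_\om$ if and only if it is an eigenvalue of $D_{\om,n}$ for some $n\in\bz$. It therefore suffices, for each fixed $n$, to match eigenvectors of $D_{\om,n}$ bijectively with nontrivial periodic $C^2$ solutions of \eqref{ndsd3}.

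The key algebraic observation is that, using $\imath z\,d/dz=d/d\theta$ on $C^1(\bt)$, the operators \eqref{ellcl} become $L_n=\frac{d}{d\theta}-nI$ and $L_n^\star=-\frac{d}{d\theta}-nI$, whence
\[
L_nL_n^\star=-\frac{d^2}{d\theta^2}+n^2 I\,.
\]
For $\l\neq 0$ and an eigenvector $\xi=\begin{pmatrix}f\\g\end{pmatrix}\in H^1(\bt)\bigoplus H^1(\bt)$ of $D_{\om,n}$ as in \eqref{0ndsd0}, the eigenvalue equation unpacks into the coupled system $L_n g=\l\d_n f$ and $L_n^\star(\d_n^{-1}f)=\l g$. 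Setting $h:=\d_n^{-1}f$ and eliminating $g$ yields $L_nL_n^\star h=\l^2\d_n^2 h$, which in the real variable $\theta$, with $H(\theta):=h(e^{\imath\theta})$ and $d_n(\theta)=\d_n(e^{\imath\theta})$, is precisely \eqref{ndsd3}. Periodicity of $H$ is automatic since $h$ is a function on $\bt$, and since $\d_n>0$ the assumption $\l\neq0$ together with the second equation prevents $f\equiv 0$, so $H\neq 0$.

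For the converse direction I would reverse the construction: given any nontrivial periodic $C^2$ solution $H$ of \eqref{ndsd3} with $\l\neq 0$, set $f:=\d_n H$ and $g:=\l^{-1}L_n^\star H$; the Hill equation implies directly that $(f,g)^T$ is a nonzero eigenvector of $D_{\om,n}$ with eigenvalue $\l$. The residual case $\l=0$ I would handle separately: the eigenvalue equations reduce to $g\in\ker L_n$ and $\d_n^{-1}f\in\ker L_n^\star$ which, evaluating $L_n$ on the basis $\{z^m\}_{m\in\bz}$, are both nontrivial exactly when $n=0$; correspondingly, \eqref{ndsd3} at $\l=0$ reads $H''=n^2 H$ and admits nontrivial periodic solutions (namely, constants) precisely for $n=0$.

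The main technical step, and the place where the most care is needed, is the regularity upgrade. A priori, $H^1$-eigenvectors of $D_{\om,n}$ produce only $H^1$ weak solutions of \eqref{ndsd3}, while the statement demands genuine $C^2$ (and therefore $C^\infty$) solutions. This follows from the standard elliptic bootstrap for second-order linear ODEs with smooth, nonvanishing leading coefficient, relying crucially on $\d_n\in C^\infty(\bt)$ being strictly positive by \eqref{rndica} --- a consequence of $\mathpzc{f}$ being a $C^\infty$ diffeomorphism. The same bootstrap also justifies the parenthetical claim in the statement that any periodic $C^2$ solution of \eqref{ndsd3} is automatically $C^\infty$.
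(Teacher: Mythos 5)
Your proposal is correct and follows essentially the same route as the paper: reduce via the direct sum $D_\om=\bigoplus_n D_{\om,n}$ to the coupled system $L_ng=\l\d_nf$, $L_n^\star(\d_n^{-1}f)=\l g$, eliminate $g$ using $L_nL_n^\star=-\frac{\di^2}{\di\th^2}+n^2$ to reach \eqref{ndsd3}, and reverse the substitution for the converse. Your explicit treatment of the $\l=0$ case and of the elliptic regularity upgrade (which the paper dispatches with ``necessarily smooth'' and ``trivially satisfied'') is a welcome, if minor, addition.
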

\begin{proof}
Taking into account \eqref{ndsd}, $\l$ is an eigenvalue of $D_\om$ if, for some $n$, there exist non vanishing periodic functions $f,g$ on the unit circle, necessarily smooth, satisfying
\begin{equation}
\label{ndsd1}
\left\{\begin{array}{ll}
                     \!\!\!\!\!\!\!&\big(\imath z\frac{\di\,\,}{\di z}-n I\big)g=\l\d_n f\,, \\[1ex]

               	\!\!\!\!\!\!\!&\big(-\imath z\frac{\di\,\,}{\di z}-n I\big)\d_n^{-1}f=\l g\,.
                    \end{array}
                    \right.
\end{equation}

By replacing $g$ from the 2nd equation in the 1st of \eqref{ndsd1}, and defining $h:=\d_n^{-1}f$, we obtain
\begin{equation*}
\Big(\Big(z\frac{\di\,\,}{\di z}\Big)^2+n^2\Big)h=\l^2\d_n^{2}h\,.
\end{equation*}

By passing to $H(\th):=h(e^{\imath\th})$ and $d_n(\th)=\d_n(e^{\imath\th})$, it becomes
$$
\Big(-\frac{\di^2\,\,}{\di\th^2}+n^2\Big)H=\l^2d_n^{2}H\,,
$$
which leads to the homogeneous Hill equation
\eqref{ndsd3}.
\medskip

Suppose now that \eqref{ndsd3} admits a non zero smooth periodic solution for some $\l\neq0$ and $n\in\bz$, the case $\l=0$ being trivially satisfied.

By putting $H(\th)d_n(\th)=f(z)$ with $z=e^{\imath\th}$, and
\begin{equation}
\label{hcdo1}
g(z):=\frac1{\l}\big(-\imath z\frac{\di\,\,}{\di z}-n I\big)\d_n^{-1}f(z)\,,
\end{equation}
an easy calculation shows that the vector-valued function 
\begin{equation*}
F(z):=\bigoplus_{k\in\bz}\delta_{n,k} f(z)\bigoplus g(z) \in \ch_\om\bigoplus\ch_\om
\end{equation*}
provides an eigenvector corresponding to the eigenvalue $\l$. 

Indeed, for $\l\neq0$ and $n\in\bz$, \eqref{ndsd1} reads
$\d_n^{-1}L_ng=\l f$ and $L_n^\star(\d_n^{-1} f)=\l g$, where $L_n$ and $L_n^\star$ are the differential operators given in \eqref{ellcl}.

We first note that \eqref{hcdo1} is nothing else than the first condition above. Concerning the second, by using the Hill equation we get
\begin{align*}
\d_n^{-1}L_ng=\frac1{\l\d_n}(L_nL_n^\star)\d_n^{-1}f
=\frac1{\l d_n}\Big(-\frac{\di^2\,\,}{\di\th^2}+n^2\Big)H=\l d_n H=\l f\,.
\end{align*}

Summarising,
\begin{align*}
D_\om F=&\begin{pmatrix} 
	 0 &\D_\om^{-1}L\\
	L^\star\D_\om^{-1}& 0\\
     \end{pmatrix}F=
\left(\bigoplus_{n\in\bz}
     \begin{pmatrix} 
	 0 &M_{\d^{-1}_n}L_n\\
	L_n^\star M_{\d^{-1}_n}& 0\\
     \end{pmatrix}\right)F\\
     =&\begin{pmatrix} 
	 0 &M_{\d^{-1}_n}L_n\\
	L_n^\star M_{\d^{-1}_n}& 0\\
     \end{pmatrix}\begin{pmatrix} 
	 f\\
	g\\
     \end{pmatrix}=\l\begin{pmatrix} 
	 f\\
	g\\\end{pmatrix}=\l F\,,
\end{align*}
and this concludes the proof.
\end{proof}
The eigenvalue problem arising from the deformed Dirac operators
\begin{equation}
\label{ndsd4}
D_{\om,\eta}=\bigoplus_{n\in\bz}\begin{pmatrix} 
	 0 &M_{\d_n^{\eta-1}}L_nM_{\d_n^{-\eta}}\\
	M_{\d_n^{-\eta}}L_n^\star M_{\d_n^{\eta-1}}& 0\\
     \end{pmatrix}\,,
    \end{equation}
defined for $\eta\in[0,1]$ in \eqref{spetr1129}, provides a generalisation that can be achieved as follows. By \eqref{ndsd4}, we get
\begin{equation*}
\left\{\begin{array}{ll}
              &\!\!\!\!\!\!\!\big(\imath z\frac{\di\,\,}{\di z}-n I\big)\d_n^{-\eta}g=\l\d_n^{1-\eta}f\,, \\[1ex]

               	&\!\!\!\!\!\!\!\big(-\imath z\frac{\di\,\,}{\di z}-n I\big)\d_n^{\eta-1}f=\l\d_n^{\eta}g\,,
                    \end{array}
                    \right.
\end{equation*}

By arguing as in the case corresponding to $\eta=0$ previously described, with $H(\th):=d_n(\th)^{\eta-1}f(e^{\imath\th})$, we obtain after some computations,
\begin{equation*}
H''-2\eta(\ln d_n(\th))'H'+\big(\l^2d_n^{2}(\th)-2\eta(\ln d_n(\th))'-n^2\big)H=0\,.
\end{equation*}
\bigskip

\noindent
\textbf{\large Acknowledgment}

\noindent
The authors acknowledge MIUR Excellence Department Project awarded to the
Department of Mathematics, University of Rome ``Tor Vergata'', CUP
E83C18000100006, and Italian INdAM-GNAMPA. 
The first-named author is partially supported by MIUR-FARE R16X5RB55W QUEST-NET.
The second-named author is grateful to the Faculty of Mathematics and Computer Science of University of W\"urzburg for the kind hospitality in occasion of the Prodi-Chair position (winter 2021-2022), where the present work was finalised.


\begin{thebibliography}{9999}

\bibitem{A} Acerbi F. {\it Nonregular representation of CCR algebras}, ISAS PhD Thesis, 1993, available online at ISAS digital Library.

\bibitem{BSvE}  Bellissard B., van Elst A., Schulz-Baldes H.
{\it The noncommutative geometry of the quantum Hall effect. Topology and physics},
 J. Math. Phys. {\bf 35} (1994), 5373-5451.

\bibitem{B} Boca F.-P. {\it Rotation $C^*$-algebras and almost Mathieu operators}, Theta, Bucharest, 2001.

\bibitem {BR} Bratteli O., Robinson D. W. \emph{Operator algebras and
quantum statistical mechanics 1}, Springer-Verlag 2002.

\bibitem{CPPR} 
Carey A.L.,  Phillips J.,  Putnam I. F., Rennie A. \emph{Families of type III KMS states on a class of C*-algebras containing $O_n$ and $\bq_N$}, J. Funct. Anal. \textbf{260} (2011) 1637-1681.

\bibitem{CPR}
Carey A.L., Phillips J.,  Rennie A. \emph{ Spectral triples: examples and index theory}, in: Carey (Ed.), Noncommutative Geometry and Physics: Renormalisation, Motives, Index Theory, in: ESI Lect.
Math. Phys., 2011, pp. 175-265.

\bibitem{CC} Chamseddine A. H., Connes A. {\it The Spectral Action Principle}, Commun. Math. Phys. {\bf 186} (1997), 731-750. 

\bibitem{C00} Connes A. {\it Une classification des facteurs de type} III,  Ann. Sci. \'Ecole Norm. Sup. {\bf 6} (1973), 133-252.

\bibitem{C0} Connes A. {\it Noncommutative Geometry}, Academic Press, San Diego, 1994.

\bibitem{C} Connes, A. {\it Noncommutative geometry and reality}, 
J. Math. Phys. {\bf 36} (1995), 6194-6231.

\bibitem{C1} Connes, A. {\it Trace formula in noncommutative geometry and the
zeros of the Riemann zeta function}, 
Journ\'ees \'Equations aux d\'eriv\'ees partielles (1997), 1-28.

\bibitem{CL} Connes A., Lott J. {\it Particle models and noncommutative geometry}, Nuclear Phys. B Proc. Suppl. {\bf 18B} (1991), 29-47.

\bibitem{CM0} Connes A., Moscovici H.
{\it Type {\rm III} and spectral triples}, Aspects Math. {\bf E38} (2008), 57-71. 

\bibitem{CM} Connes A., Moscovici H. {\it Modular curvature for noncommutative two-tori},
J. Amer. Math. Soc. {\bf 27} (2014), 639-684.

\bibitem{CS} Connes A., van Suijlekom W. D. {\it Spectral truncations in noncommutative geometry and operator systems}, Commun. Math. Phys. {\bf 383}, (2021), 2021-2067.

\bibitem{RMPR} Correia Ramos C., Martins N., Pinto P. R., Sousa Ramos J.
{\it Orbit equivalence and von Neumann algebras for expansive interval maps}, 
Chaos Solitons Fractals {\bf 33} (2007), 109-117.

\bibitem{D} Davidson K. R. {\it $C^*$-algebras by example}, Providence RI, 1996.

\bibitem{FH} Fidaleo F. {\it Fourier analysis for type III representations of the noncommutative torus}, J. Fourier Anal. Appl., {\bf 25} (2019), 2801-2835.

\bibitem{FS} Fidaleo F., Suriano L. {\it Type $\ty{III}$ representations and modular spectral triples for the noncommutative torus}, J. Funct. Anal. {\bf 275} (2018), 1484-1531.

\bibitem{FMR} Forsyth I., Mesland B., Rennie A {\it Dense domains, symmetric operators and spectral triples}, New York J. Math. {\bf 20} (2014), 1001-1020.

\bibitem{GBVF} Gracia-Bond\'ia J. M., V\'arilly J. C., Figueroa H. {\it  Elements of noncommutative geometry}, Springer, 2013.

\bibitem{GMT} Greenfield M., Marcolli M., Teh K. \emph{Twisted spectral triples and quantum statistical mechanical systems}, P-Adic Numbers Ultrametric Anal. Appl. \textbf{6} (2014), 81-104.

\bibitem{HHW} Haag R., Hugenholtz N. M., Winnink, M. {\it On the equilibrium states in quantum statistical mechanics}, Commun. Math. Phys. {\bf 5}, (1967), 215-236. 

\bibitem{Had} Hadfield T. {\it Fredholm modules on certain group $C^*$-algebras}, J. Oper. Theory {\bf 51} (2004), 141-160.

\bibitem{Hi} Hill G.W. {\it On the part of the motion of lunar perigee which is a function of the mean motions of the sun and moon}, Acta Math. {\bf 8} (1886), 1-36. 

\bibitem{KH}
A. Katok A., Hasselblatt B., Introduction to the Modern Theory of Dynamical Systems, Encyclopedia of Mathematics and Its Applications,  {\bf 54}, Cambridge University Press, Cambridge, 1995.

\bibitem{K} Khalkhali M. {\it A short survey on cyclic cohomology}, Clay Math. Proc. {\bf 11} (2010), 283-311.

\bibitem{Ki} Khinchin A. Ya. {\it Continued fractions}, The University of Chicago Press, 1964.

\bibitem{K2} Krieger W. {\it On the Araki-Woods asymptotic ratio set and non singular transformations of a measure space}, in: Ergodic Theory and Probability (Proc. Conf., Ohio State Univ., Columbus, Ohio, 1970), Lecture Notes in Math. {\bf 160} (1970) Springer, 158-177.

\bibitem{K2a} Krieger W. {\it On constructing non $*$-isomorphic hyperfinite factors of type} III, J. Funct. Anal. {\bf 6} (1970), 97-109.

\bibitem{Ku} Kubo, R. {\it Statistical-Mechanical theory of irreversible processes I. General theory and simple applications to magnetic and conduction problems}, J. Phys. Soc. Japan 
{\bf 12} (1957), 570-586.

\bibitem{LM} Landi G., Martinetti P. \emph{Gauge transformations for twisted spectral triples}, Lett. Math. Phys. \textbf{108} (2018), 2589-2626. 

\bibitem{MS} Martin P. C., Schwinger J {\it Theory of Many-Particle Systems I}, Phys. Rev. {\bf 115} (1959), 1342-1373.

\bibitem{Ma} Matsumoto S. {\it Orbit equivalence types of circle diffeomorphisms with a
Liouville rotation number},
Nonlinearity {\bf 26} (2013), 1401-1414.

\bibitem{M} Mayer R. {\it Real Spectral Triples and Charge Conjugation}, in: 
Noncommutative Geometry and the Standard Model of Elementary Particle Physics. Lecture Notes in Phys. {\bf 596} (2002) Springer, 11-20

\bibitem{MO} Moscovici H. {Local index formula and twisted spectral triples}, Quanta of maths, 465-500, Clay Math. Proc. {\bf 11}, Amer. Math. Soc., Providence, RI, 2010.

\bibitem{NSZ} Niculescu C.P., Ströh A., Zsidó L. {\it Noncommutative extensions of classical and multiple recurrence theorems}, J. Operator Theory \textbf{50} (2003) 3-52

\bibitem{RS} Reed M, Simon B. {\it Functional Analysis}, Academic Press, 1980.

\bibitem{S} Sondow J. {\it An irrationality measure for Liouville numbers and conditional measures for Euler's constant}, arxiv 0307308.

\bibitem{St} Str\v{a}til\v{a} \c{S} 
{\it Modular theory in operator algebras}, Abacus Press,
Tunbridge Wells, Kent, (1981).

\bibitem{T0} Takesaki M. {\it Tomita's theory of modular Hilbert algebras and its applications}, LNM {\bf 128}, Springer, Berlin-Heidelberg-New
York, 1970.

\bibitem{T1} Takesaki M. {\it Theory of operator algebras I, III}, Springer, Berlin-Heidelberg-New
York, 1979 and 2003.

\bibitem{Yo} Yoccoz J.-C. {\it Conjugaison diff\'erentiable des diff\'eomorphismes du cercle dont le nombre de rotation v\'erifie une conditon diophantine}, Ann. Sci. \'Ecole Norm. Sup. {\bf 17} (1984), 333-359.

\bibitem{W} Watanabe N. {\it growth sequences for circle diffeomorphisms},
Geom. Funct. Anal. {\bf 17} (2007), 320-331.

\end{thebibliography}
\end{document}